\documentclass[a4paper]{article}
\usepackage{geometry}
 \geometry{
 a4paper,
 right = 25mm, 
 left=25mm,
 top=30mm,
 }
\usepackage[utf8]{inputenc}
\usepackage{graphicx}
\usepackage{amsmath}
\usepackage{amsthm}
\usepackage{amssymb}
\usepackage{float}
\usepackage{caption}
\usepackage{hyperref}
\usepackage{algorithm}
\usepackage{xcolor}
\usepackage{mathtools}
\usepackage{dsfont}
\usepackage{tikz}
\usepackage{comment}
\usepackage[algo2e,ruled,vlined]{algorithm2e} 
\usepackage{algorithm}
\usepackage{algorithmicx}
\usepackage{stmaryrd}

\usepackage[noend]{algpseudocode}

\usepackage[backend=bibtex, style=numeric, maxbibnames=5]{biblatex}
\addbibresource{storage.bib}

\mathtoolsset{showonlyrefs}

\DeclareMathOperator{\R}{\mathbb{R}}

\def \E{\mathbb{E}}
\def \F{\mathbb{F}}

\def \K{\mathbb{K}}

\def \P{\mathbb{P}}

\def\1{{\bf 1}}

\def\Fc{{\cal F}}

\def\Uc{{\cal U}}

\def\Kc{{\cal K}}
\def\Ac{{\cal A}}

\def\Pc{{\cal P}}
\def\Wc{{\cal W}}
\def\Yc{{\cal Y}}

\def\Zc{{\cal Z}}

\def\argmin_#1{\underset{#1}{\mathrm{argmin\, }}}
\newtheorem{Theorem}{Theorem}[section]
\newtheorem{rem}[Theorem]{Remark}
\newtheorem{lem}[Theorem]{Lemma}
\newtheorem{assumption}[Theorem]{Assumption}
\newtheorem{Pro}[Theorem]{Proposition}

\newcommand{\di}{\mathrm{d}}

\numberwithin{equation}{section} 

\def \mfa{\mathfrak{a}}

\def \mrw{\mathrm{w}}

\begin{document}
\title{A level-set approach to the control of state-constrained McKean-Vlasov equations: application to renewable energy storage and portfolio selection\thanks{This work  was supported by FiME (Finance for Energy Market Research Centre) and
the ``Finance et D\'eveloppement Durable - Approches Quantitatives'' EDF - CACIB Chair.
We thank Marianne Akian,  Olivier Bokanowski, and Nadia Oudjane for useful comments. 
}}

\author{Maximilien \sc{Germain}\footnote{EDF R\&D, Université de Paris and Sorbonne Université, CNRS, Laboratoire de Probabilités, Statistique et Modélisation, F-75013 Paris, France  \sf \href{mailto:maximilien.germain at edf.fr}{mgermain at lpsm.paris}} \and
Huy\^en \sc{Pham}\footnote{Université de Paris and Sorbonne Université, CNRS, Laboratoire de Probabilités, Statistique et Modélisation, F-75013 Paris, France,  CREST-ENSAE \& FiME \sf \href{mailto:pham at lpsm.paris}{pham at lpsm.paris}} 
\and
Xavier \sc{Warin}\footnote{EDF R\&D \& FiME \sf \href{mailto:xavier.warin at edf.fr}{xavier.warin at edf.fr}}} 


\date{{\it to appear in Numerical Algebra, Control and Optimization}}

\maketitle

\begin{abstract}
We consider the control of McKean-Vlasov dynamics (or mean-field control) with probabilistic state constraints. We rely on a level-set approach which provides a representation of  the constrained problem in terms of  an unconstrained one with exact penalization and running maximum or integral cost. 
The method  is then  extended to the common noise setting. Our work extends (Bokanowski, Picarelli, and Zidani, SIAM J. Control Optim. 54.5 (2016), pp. 2568–2593) and  (Bokanowski, Picarelli, and Zidani, Appl. Math. Optim. 71 (2015), pp. 125–163) to a mean-field setting. 

The reformulation as an unconstrained problem is particularly suitable for the numerical resolution of the problem, that is achieved from  an extension of a machine learning algorithm from (Carmona, Laurière, arXiv:1908.01613 to appear in Ann. Appl. Prob., 2019). A first application concerns the storage of renewable electricity in the presence of mean-field price impact and another one focuses on a mean-variance portfolio selection problem with probabilistic constraints on the wealth. 
We also illustrate our approach for a direct numerical resolution of the primal Markowitz continuous-time problem without relying on duality. 
\end{abstract}%

\noindent \textbf{Keywords:} mean-field control, state constraints, neural networks.

\vspace{3mm}

\noindent \textbf{AMS subject classification:} 49N80, 49M99, 68T07, 93E20.



\section{Introduction}

The control of McKean-Vlasov dynamics, also known as mean-field control problem,  
has attracted a lot of interest over the last years since the emergence of the mean-field game theory. There is now an important literature on this topic addressing on one hand the theoretical aspects either by dynamic programming approach (see 
\cite{LP14,PW17,PW18,CP19}), or by maximum principle (see \cite{CD43}), 
and on the other hand the numerous applications in economics and finance, and  
we refer to the two-volume monographs \cite{cardel19,cardel19b} for an exhaustive and detailed  treatment of this area.

In this paper, we aim to study control of McKean-Vlasov dynamics under the additional presence of state constraints in law. The consideration of probabilistic constraints (usually in expectation or in target form) for standard stochastic control has many practical applications, notably in finance with quantile and CVaR type constraints,  and is the subject of many papers, we refer to  \cite{ST02,BEI10,surveyConstraints,chowetal,PTZ21,BALATA2021640} for an overview.

There exists some recent works dealing with mean-field control under some specific law state constraints. For example, the paper \cite{CW19} solves mean-field control with delay and smooth expectation terminal constraint (and without dependence with respect to the law of the control).  In the case of mean field games, state constraints are considered by \cite{CC18,CCC18,FuHorst,JGM21,AM21}. In these cited works the state belongs to a compact set, which corresponds to a particular case of our  constraints in distribution. Related literature includes the recent work  
\cite{BDK20} which studies a mean-field target problem where the aim is to find the initial laws of a controlled McKean-Vlasov process satisfying a law constraint, but only at terminal time. The paper \cite{D20} also studies these terminal constraint in law for the control of a standard diffusion process. Next, it has been extended in \cite{D21} to a running law constraint for the control of a standard diffusion process with McKean-Vlasov type cost through the control of a Fokker-Planck equation. Several works also 
consider directly the optimal control of Fokker-Planck equations in the Wasserstein space with terminal or running constraints, such as \cite{B19,BF21} through Pontryagin principle, in the deterministic case without diffusion. 

In this paper, we consider general running (at discrete or continuous time) 
and terminal constraints in law, and extend  the level-set approach \cite{BPZ15,BPZ16} (see also \cite{ABZ13} in the deterministic case) to our mean-field setting. This enables us to reformulate the constrained McKean-Vlasov control problem into an unconstrained mean-field control problem with an auxiliary state variable, and a running path-dependent supremum cost or alternatively a non path-dependent integral cost over the constrained functions. 
Such equivalent representations of the control problem with exact penalization turns out to be quite useful  for an efficient numerical resolution of the original constrained mean-field control problem. We shall actually adapt the machine learning algorithm in \cite{CL19} for solving two applications in renewable energy storage and in portfolio selection. 

The outline of the paper is organized as follows. Section \ref{sec: no common noise} develops the level-set approach in our constrained mean-field setting with supremum term.  
We present in Section \ref{sec: integral} the alternative level-set formulation with integral term, and discuss when the optimization over open-loop controls yields the same value than the optimization over closed-loop controls. This will be useful for numerical purpose in the approximation of optimal controls.  
The method is then extended in Section \ref{sec:noise} to the common noise setting.
Finally, we present in Section \ref{sec:numerical} the applications and numerical tests.

\section{Mean-field control with state constraints}\label{sec: no common noise}

Let $(\Omega,\Fc,\P)$ be a probability space on which is defined a $d$-dimensional Brownian motion $W$ with associated filtration $\F=(\Fc_t)_t$ augmented with $\P$-null sets. We assume that $\Fc_0$ is ``rich enough" in the sense that any   probability measure $\mu$ on $\R^d$ can be represented as the distribution law of some $\Fc_0$-measurable random variable. This is satisfied whenever the probability space 
$(\Omega,\Fc_0,\P)$ is atomless,  see \cite{cardel19},  p.352.

We consider the following cost and dynamics:
\begin{align}\label{eq: original problem}
J(X_0,\alpha) & = \E\Big[\int_0^T f\big(s,X_s^{\alpha},\alpha_s, \P_{(X_s^{\alpha},\alpha_s)}\big)\ \di s + g\big(X_T^{\alpha}, \P_{X_T^{\alpha}}\big) \Big]  \\
    X_t^{\alpha} & = X_0 + \int_0^t b\big(s,X_s^{\alpha},\alpha_s,\P_{(X_s^{\alpha},\alpha_s)})\ \di s + \int_0^t \sigma(s,X_s^{\alpha},\alpha_t,\P_{(X_s^{\alpha},\alpha_s)}\big)\ \di W_s,  \label{eq: MKV SDE}
\end{align} where $\P_{(X_s^{\alpha},\alpha_s)}$ 
is the joint law of $(X_s^{\alpha},\alpha_s)$ under $\P$ and $X_0 $ is a given random variable in $L^2(\Fc_0,\R^d)$. The control $\alpha$ belongs to a set $\Ac$ of $\F$-progressively measurable processes with values in a  set $A\subseteq\R^q$.  
The coefficients $b$ and $\sigma$ are  measurable functions from $[0,T]\times\R^d\times A\times\Pc_2(\R^d\times A)$ into $\R^d$ and $\R^{d\times d}$,  where $\Pc_2(E)$ is the  set of square integrable probability measures on the metric space $E$, equipped with the $2$-Wasserstein distance $\Wc_2$. 
We make some standard Lipschitz conditions on $b,\sigma$ in order to ensure that equation \eqref{eq: MKV SDE} is well-defined and admits a unique strong solution, which is square-integrable. The function $f$ is a real-valued measurable function on  $[0,T]\times\R^d\times A\times\Pc_2(\R^d\times A)$, while 
$g$ is a measurable function on $\R^d\times\Pc_2(\R^d)$, and we assume that $f$ and $g$ satisfy some linear growth condition which ensures that the functional in \eqref{eq: original problem} is well-defined.

Furthermore, the law of the controlled McKean-Vlasov process $X$ is constrained to verify 
\begin{align} \label{consPSI}  
\Psi( t,\P_{X_t^{\alpha}}) &  \leq \;  0, \quad 0 \leq t \leq T,
\end{align} 
here  $\Psi_l$, $l$ $=$ $1,\ldots,k$, are  given functions from $[0,T] \times \Pc_2(\R^d)$ into $\R$,  and  $\Psi$ $:=$ $\max_{1\leq l\leq k}\Psi_l$.
In other words, the  constraint $\Psi(t,\mu)$ $\leq$ $0$ means that  $\Psi^l(t,\mu)$ $\leq$ $0$, $l$ $=$ $1,\cdots,k$.  The problem of interest is therefore 
\begin{align}
    V & : = \;  \inf_{\alpha\in\Ac} 
    \big\{J(X_0,\alpha)  :  \Psi(t,\P_{X_t^{\alpha}})\leq 0 ,\ \forall\ t\in[0,T]  \big\}. 
\end{align}
By convention the infimum of the empty set is $+\infty$. When needed, we will sometimes use the notation $ V^\Psi$ to emphasize the dependence of the value function on $\Psi$. 
Clearly, 
$\Psi \leq \Psi' $ implies $ V^{\Psi} \leq V^{\Psi'}$.

\begin{rem}\label{rem: constraints}
This very general type of constraints includes for instance: \begin{itemize}
    \item Controlled McKean-Vlasov process $X$ constrained to stay inside a non-empty closed set $\Kc_t\subseteq\R^d$ with probability larger than a threshold $p_t\in[0,1]$, namely 
\begin{align}
    \P(X_t^{\alpha} \in \Kc_t) \geq p_t ,\ \forall\ t\in[0,T],
\end{align} with $\Psi:(t,\mu) \mapsto p_t - \mu(\Kc_t)$. With $p_t = 1,\ \forall t\in[0,T]$ it yields almost sure constraints.
\item Almost sure contraints on the state, $X_t^\alpha\in\Kc_t,\ \forall t\in[0,T]\ \P$ a.s., with \begin{equation}
    \Psi:(t,\mu) \mapsto \int_{\R^d} d_{\Kc_t}(x)\ \mu(\di x),
\end{equation} where $d_{\Kc_t}$ is the distance function to the non-empty closed set $\Kc_t$. 
\item The case of a Wasserstein ball constraint around a benchmark law $\eta_t$ in the form $\Wc_2(\P_{X_t^{\alpha}},\eta_t) \leq \delta_t$ with
\begin{equation}
    \Psi:(t,\mu) \mapsto \Wc_2(\mu,\eta_t) - \delta_t.
\end{equation} This is the constraint considered in \cite{PJ21} at terminal time.
\item A terminal constraint in law $\varphi(\P_{X_T^{\alpha}})\leq 0$ as in \cite{D20} with \begin{equation}\Psi:(t,\mu) \mapsto
   \varphi(\mu)\mathds{1}_{t=T}.
\end{equation} 
\item Terminal constraint in law $\P_{X_T^{\alpha}}\in\K\subset\Pc_2(\R^d)$ as in \cite{BDK20} with \begin{equation}\Psi:(t,\mu) \mapsto
   (1 - \mathds{1}_{\mu\in\K })\mathds{1}_{t=T}.
\end{equation}
\item The case of discrete time constraints $\phi(t_i,\P_{X_{t_i}^{\alpha}})\leq 0$ for $t_1<\cdots<t_k$ with \begin{equation}\Psi:(t,\mu) \mapsto
   \phi(t,\mu)\mathds{1}_{t\in \{t_1,\cdots,t_k \}}.
\end{equation}
\end{itemize}
\end{rem}

Even though this problem seems much more involved than the standard stochastic control problem with state constraints investigated in \cite{BPZ16}, thanks to an adequate reformulation, it turns out that we can adapt the main ideas from this paper to our framework and construct similarly an unconstrained auxiliary problem (in infinite dimension).

\subsection{A target problem and an associated control problem}

Given $z$ $\in$ $\R$, and $\alpha$ $\in$ $\Ac$,  define a new state variable
\begin{align}\label{eq: definition Z}
    Z^{z,\alpha}_{t} := z - \E\Big[\int_0^t f\big(s,X_s^{\alpha},\alpha_s, \P_{(X_s^{\alpha},\alpha_s)}\big)\ \di s\Big] = z - \int_0^t \widehat{f}\big(s,\P_{(X_s^{\alpha},\alpha_s)}\big)\ \di s, \quad 0 \leq t \leq T, 
\end{align}
where  $\widehat f$ is the function defined on  $[0,T]\times\Pc_2(\R^d\times A)$ by  $\widehat f(t,\nu)$ $=$  $\int_{\R^d\times A} f\big(t,x,a, \nu\big)\ \nu(\di x, \di a)$.  We also denote by $\widehat g$ the function defined on $\Pc_2(\R^d)$ by 
$\widehat g(\mu)$ $=$ $\int_{\R^d} g(x,\mu) \mu(\di x)$. 
\begin{lem} \label{lem: target}
The value function admits the \textbf{deterministic target problem} representation
\begin{align}\label{eq: target problem}
    V = \inf \{z\in\R\ |\ \exists\ \alpha\in\Ac\ \mathrm{s.t.}\ \widehat{g}(\P_{X_T^{\alpha}})\leq Z^{z,\alpha}_T,\ \Psi(s,\P_{X_s^{\alpha}})\leq 0,\ \forall\ s\in[0,T]   \}.
\end{align}
\end{lem}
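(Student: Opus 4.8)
The plan is to recognize that the target constraint $\widehat{g}(\P_{X_T^{\alpha}}) \leq Z^{z,\alpha}_T$ is merely a repackaging of the scalar inequality $J(X_0,\alpha) \leq z$, after which the claimed identity collapses to a routine statement about infima and epigraphs. So the whole lemma is an exact-penalization reformulation threaded through the auxiliary variable $Z$, and almost none of the work is analytic.

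First I would establish the cost identity
\[
J(X_0,\alpha) \;=\; \int_0^T \widehat{f}\big(s,\P_{(X_s^{\alpha},\alpha_s)}\big)\, \di s \;+\; \widehat{g}\big(\P_{X_T^{\alpha}}\big),
\]
which follows directly from the definitions of $\widehat{f}$ and $\widehat{g}$ together with Fubini's theorem (licensed by the square-integrability of $X^{\alpha}$ and the linear growth imposed on $f,g$): this lets me exchange the expectation with the time integral and rewrite $\E[f(s,X_s^{\alpha},\alpha_s,\P_{(X_s^{\alpha},\alpha_s)})]$ as $\widehat{f}(s,\P_{(X_s^{\alpha},\alpha_s)})$ and $\E[g(X_T^{\alpha},\P_{X_T^{\alpha}})]$ as $\widehat{g}(\P_{X_T^{\alpha}})$. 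Recalling $Z^{z,\alpha}_T = z - \int_0^T \widehat{f}(s,\P_{(X_s^{\alpha},\alpha_s)})\,\di s$, the target constraint $\widehat{g}(\P_{X_T^{\alpha}}) \leq Z^{z,\alpha}_T$ rearranges exactly into $J(X_0,\alpha)\leq z$, so the two conditions are identical.

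Next I would conclude by the standard epigraph argument. Writing $\Ac_{\mathrm{feas}} := \{\alpha\in\Ac : \Psi(s,\P_{X_s^{\alpha}})\leq 0,\ \forall s\in[0,T]\}$ for the set of feasible controls, the right-hand side of the claim equals $\inf\{z\in\R : \exists\,\alpha\in\Ac_{\mathrm{feas}},\ J(X_0,\alpha)\leq z\}$, while by definition $V = \inf_{\alpha\in\Ac_{\mathrm{feas}}} J(X_0,\alpha)$. For any $z > V$ there is, by the definition of the infimum, a feasible $\alpha$ with $J(X_0,\alpha) < z$, hence in particular $J(X_0,\alpha)\leq z$, so every such $z$ is admissible; conversely no $z < V$ can be admissible, since every feasible $\alpha$ satisfies $J(X_0,\alpha)\geq V > z$. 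Thus the admissible set of $z$ contains $(V,\infty)$ and is contained in $[V,\infty)$, and its infimum is precisely $V$.

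The one point deserving care — the closest thing to an obstacle — is the degenerate situation handled through the convention $\inf\emptyset = +\infty$. If $\Ac_{\mathrm{feas}}=\emptyset$, then no admissible $z$ exists and the right-hand side is $+\infty$, matching $V=+\infty$; I should also note that the inequality-based reasoning above deliberately never assumes the infimum defining $V$ is attained, so no compactness or existence of an optimal control is needed. Everything beyond this is bookkeeping.
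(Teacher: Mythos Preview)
Your proof is correct and follows essentially the same approach as the paper: both rewrite $J(X_0,\alpha)$ as $\int_0^T \widehat f(s,\P_{(X_s^\alpha,\alpha_s)})\,\di s + \widehat g(\P_{X_T^\alpha})$, observe that the target constraint $\widehat g(\P_{X_T^\alpha})\le Z_T^{z,\alpha}$ is equivalent to $J(X_0,\alpha)\le z$, and reduce the identity to the elementary epigraph representation of an infimum. Your version is simply more explicit about Fubini, the epigraph argument, and the degenerate case $\Ac_{\mathrm{feas}}=\emptyset$, which the paper leaves implicit.
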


\begin{proof}
We first observe from the definition of $V$ in \eqref{consPSI} that it can be rewritten as 
\begin{align}
    V = \inf \{z\in\R\ |\ \exists\ \alpha\in\Ac\ \mathrm{s.t.}\ J(X_0,\alpha)\leq z,\ \Psi(t,\P_{X_t^{\alpha}})\leq 0,\ \forall\ t\in[0,T] \}.
\end{align} 
Next, by noting that the cost functional is written as 
\begin{align*}
J(X_0,\alpha) &= \; \int_0^T \widehat{f}\big(t,\P_{(X_t^{\alpha},\alpha_t)}\big)\ \di t +  \widehat{g}(\P_{X_T^{\alpha}}),   
\end{align*}
the result then follows immediately by the definition of $Z^{z,\alpha}$ in \eqref{eq: definition Z}. 
\end{proof}

We want to link this representation to the \textbf{zero-level set} of the solution of an auxiliary unconstrained control problem. Define the \textbf{auxiliary unconstrained deterministic} control problem: 
 
\begin{align}\label{eq: auxiliary problem}
 \Yc^\Psi: z\in\R \mapsto \inf_{\alpha\in\Ac} \Big[\{\widehat{g}(\P_{X_T^{\alpha}})- Z^{z,\alpha}_{T} \}_+ +  \sup_{s\in[0,T]} \{\Psi(s,\P_{X_s^{\alpha}})\}_+ \Big], 
\end{align}
with the notation $\{x\}_+ = \max(x,0)$ for the positive part. We see that $\Yc^\Psi(z)\geq 0$. 

By classical estimates on McKean-Vlasov equations we can obtain continuity and growth conditions on $\Yc^\Psi$. 
The proof of Proposition \ref{prop: continuity of w} is given in Section \ref{sec: proofs}. 
\begin{Pro}\label{prop: continuity of w}
$\Yc^\Psi$ verifies
\begin{enumerate}
    \item $\Yc^\Psi$ is 1-Lipschitz. For any $z,z'\in  \R$ \begin{equation}
        |\Yc^\Psi(z) - \Yc^\Psi(z')|\leq |z-z'|.
    \end{equation} 
\item $\Yc^\Psi$ is non-increasing. Thus if $\Yc^\Psi(z_0) = 0$ then $\Yc^\Psi(z) = 0$ for all $z\geq z_0$.
\end{enumerate}
\end{Pro}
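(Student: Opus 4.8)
The plan is to exploit the fact that, in the definition of $\Yc^\Psi$, the auxiliary variable $z$ enters only through $Z^{z,\alpha}$, and does so affinely, whereas the controlled state $X^\alpha$ (and hence the penalty built from $\Psi$) does not depend on $z$ at all. Concretely, using the decomposition $J(X_0,\alpha) = \int_0^T \widehat f\big(t,\P_{(X_t^\alpha,\alpha_t)}\big)\,\di t + \widehat g(\P_{X_T^\alpha})$ already recorded in the proof of Lemma~\ref{lem: target} together with \eqref{eq: definition Z}, one has $\widehat g(\P_{X_T^\alpha}) - Z^{z,\alpha}_T = J(X_0,\alpha) - z$. Writing $\Phi(\alpha) := \sup_{s\in[0,T]}\{\Psi(s,\P_{X_s^\alpha})\}_+ \geq 0$, which is independent of $z$, the auxiliary problem therefore reads
\[
\Yc^\Psi(z) = \inf_{\alpha\in\Ac}\, h_\alpha(z), \qquad h_\alpha(z) := \{J(X_0,\alpha) - z\}_+ + \Phi(\alpha).
\]
Both claimed properties will then follow from elementary properties of the maps $z \mapsto h_\alpha(z)$, uniform in $\alpha$.

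For the first item, I would note that for each fixed $\alpha$ the function $z\mapsto\{J(X_0,\alpha)-z\}_+$ is $1$-Lipschitz, being the composition of the $1$-Lipschitz map $x\mapsto\{x\}_+$ with the affine map $z\mapsto J(X_0,\alpha)-z$ of slope $-1$; adding the $z$-independent constant $\Phi(\alpha)$ preserves this, so $|h_\alpha(z)-h_\alpha(z')|\leq|z-z'|$ for all $z,z'$ and all $\alpha$. The bound then passes to the infimum by the standard argument: from $\Yc^\Psi(z)\leq h_\alpha(z)\leq h_\alpha(z')+|z-z'|$ and taking the infimum over $\alpha$ on the right-hand side we obtain $\Yc^\Psi(z)\leq\Yc^\Psi(z')+|z-z'|$, and exchanging the roles of $z$ and $z'$ yields the claim. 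Here $\Yc^\Psi$ is finite-valued: it is nonnegative by construction and bounded above by $h_\alpha(z)<\infty$ for any admissible $\alpha$ with $\Phi(\alpha)<\infty$, the finiteness of $J(X_0,\alpha)$ being guaranteed by the standing Lipschitz and growth assumptions on $b,\sigma,f,g$.

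For the second item, each $z\mapsto\{J(X_0,\alpha)-z\}_+$ is non-increasing, hence so is each $h_\alpha$, and a pointwise infimum of non-increasing functions is non-increasing; therefore $\Yc^\Psi$ is non-increasing. Combined with $\Yc^\Psi\geq 0$, this gives the final assertion: if $\Yc^\Psi(z_0)=0$ then for every $z\geq z_0$ one has $0\leq\Yc^\Psi(z)\leq\Yc^\Psi(z_0)=0$, i.e. $\Yc^\Psi(z)=0$.

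I do not expect a genuine obstacle: the entire content is the observation that the $z$-dependence is affine and confined to the terminal target term, which reduces everything to the $1$-Lipschitzness and monotonicity of the positive part. The only place where the ``classical McKean--Vlasov estimates'' alluded to in the text intervene is the finiteness of $h_\alpha(z)$ — the well-posedness and square-integrability of $X^\alpha$ ensuring $J(X_0,\alpha)<\infty$ — which is already built into the standing assumptions and is needed merely so that $\Yc^\Psi$ is real-valued and the Lipschitz estimate is meaningful.
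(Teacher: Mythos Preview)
Your proof is correct and follows essentially the same approach as the paper: both exploit that $z$ enters only through the affine term inside the positive part, so that the $1$-Lipschitzness and monotonicity of $x\mapsto\{x\}_+$ pass to each $h_\alpha$ and then to the infimum. Your rewriting $\widehat g(\P_{X_T^\alpha}) - Z^{z,\alpha}_T = J(X_0,\alpha) - z$ is a cosmetic simplification of what the paper does directly via $|Z^{z,\alpha}_T - Z^{z',\alpha}_T| = |z-z'|$, but the substance is identical.
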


\paragraph{}
Define the infimum of the zero level-set \begin{equation}\label{eq: definition Z K}
    \Zc^\Psi := 
\inf \{z\in\R\ |\ \Yc^\Psi(z) = 0\}.
\end{equation}

We prove a first result linking the auxiliary control problem with the original constrained problem. Solving this easier problem provides bounds on the value function, by making the constraint function vary.

\begin{Theorem}\label{th: main}
\begin{enumerate}
    \item If for some $z\in \R\  \exists\ \alpha\in\Ac\ \mathrm{s.t.}\ \widehat{g}(\P_{X_T^{\alpha}})\leq Z^{z,\alpha}_{T},\ \Psi(s,\P_{X_s^{\alpha}})\leq 0,\ \forall\ s\in[0,T] $  then $ \Yc^\Psi(z) = 0 $. 
    \item If $V^\Psi$ is finite then $\Yc^\Psi(V^\Psi) = 0 $. Thus $\Zc^\Psi \leq V^\Psi$.
    \item We have the upper bound 
    \begin{equation}
        V^\Psi \leq \inf_{ \varepsilon > 0} \Zc^{\Psi+\varepsilon}.
    \end{equation} 
\end{enumerate}
\end{Theorem}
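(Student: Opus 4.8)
The plan is to treat the three items in turn, leaning on the target representation of Lemma \ref{lem: target}, the $1$-Lipschitz continuity and monotonicity of $\Yc^\Psi$ from Proposition \ref{prop: continuity of w}, and the elementary lower bound $\Yc^\Psi \geq 0$ recorded just after \eqref{eq: auxiliary problem}.

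For item 1 I would simply evaluate the two nonnegative terms in \eqref{eq: auxiliary problem} at the given feasible $\alpha$. Feasibility forces $\{\widehat{g}(\P_{X_T^\alpha}) - Z_T^{z,\alpha}\}_+ = 0$ and $\{\Psi(s,\P_{X_s^\alpha})\}_+ = 0$ for every $s$, so this $\alpha$ makes the bracket vanish and the infimum is $\leq 0$; combined with $\Yc^\Psi \geq 0$ this gives $\Yc^\Psi(z) = 0$.

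For item 2 I would first observe that the feasible set $S := \{z : \exists\,\alpha \text{ with } \widehat{g}(\P_{X_T^\alpha}) \leq Z_T^{z,\alpha},\ \Psi(s,\P_{X_s^\alpha}) \leq 0\ \forall s\}$ of Lemma \ref{lem: target} is upward closed: by \eqref{eq: definition Z}, increasing $z$ raises $Z_T^{z,\alpha}$ by the same amount for a fixed $\alpha$, so any $\alpha$ feasible for $z'$ remains feasible for every $z > z'$. Hence each $z > V^\Psi = \inf S$ lies in $S$, and item 1 yields $\Yc^\Psi(z) = 0$. Letting $z \downarrow V^\Psi$ and invoking the $1$-Lipschitz continuity from Proposition \ref{prop: continuity of w} gives $\Yc^\Psi(V^\Psi) = 0$, so $V^\Psi$ belongs to the zero level-set and $\Zc^\Psi \leq V^\Psi$ by \eqref{eq: definition Z K}.

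Item 3 is where the additive perturbation does the real work, and I expect it to be the main obstacle. Set $m := \inf_{\varepsilon > 0}\Zc^{\Psi+\varepsilon}$ and take any $z > m$; then $z > \Zc^{\Psi+\varepsilon_0}$ for some $\varepsilon_0 > 0$, and since $\Yc^{\Psi+\varepsilon_0}$ is non-increasing with upward-closed zero level-set (Proposition \ref{prop: continuity of w}), this forces $\Yc^{\Psi+\varepsilon_0}(z) = 0$. For a slack $\delta \in (0,\varepsilon_0)$ I would select a near-minimizer $\alpha$ with $\{\widehat{g}(\P_{X_T^\alpha}) - Z_T^{z,\alpha}\}_+ + \sup_s \{\Psi(s,\P_{X_s^\alpha}) + \varepsilon_0\}_+ < \delta$, so that each nonnegative term is itself $< \delta$. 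The decisive point is that the $\varepsilon_0$-buffer upgrades approximate feasibility to genuine feasibility: from $\Psi(s,\P_{X_s^\alpha}) + \varepsilon_0 \leq \sup_s\{\cdots\}_+ < \delta < \varepsilon_0$ I obtain $\Psi(s,\P_{X_s^\alpha}) < 0$ for all $s$, so $\alpha$ is admissible for $V^\Psi$; meanwhile $\widehat{g}(\P_{X_T^\alpha}) - Z_T^{z,\alpha} < \delta$ rewrites, via \eqref{eq: definition Z} and the decomposition $J(X_0,\alpha) = \int_0^T \widehat{f}\,\di s + \widehat{g}(\P_{X_T^\alpha})$, as $J(X_0,\alpha) < z + \delta$. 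Therefore $V^\Psi \leq J(X_0,\alpha) < z + \delta$; sending $\delta \downarrow 0$ and then $z \downarrow m$ closes the argument. The subtle bookkeeping is precisely the ordering $\delta < \varepsilon_0$, which is what makes the strict inequality $\Psi < 0$ (true feasibility, rather than only $\Psi \leq \delta$) available, and explains why one must perturb $\Psi$ upward instead of working with $\Psi$ directly.
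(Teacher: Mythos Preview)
Your proof is correct and follows essentially the same strategy as the paper. The only cosmetic differences are that in item~2 you phrase the approximation via the upward-closedness of the feasible set from Lemma~\ref{lem: target} (the paper speaks instead of admissible $\varepsilon$-optimal controls for the original problem, which amounts to the same thing), and in item~3 you evaluate $\Yc^{\Psi+\varepsilon_0}$ at a generic $z>\Zc^{\Psi+\varepsilon_0}$ using monotonicity, whereas the paper evaluates directly at $\Zc^{\Psi+\varepsilon}$ using continuity; the key mechanism---the additive buffer $\varepsilon_0$ absorbing the near-optimality slack $\delta<\varepsilon_0$ to produce genuine admissibility---is identical.
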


To sum up, when $V^\Psi<+\infty$, Theorem \ref{th: main} provides the bounds

\begin{equation}\label{eq: bounds}
    \Zc^\Psi \leq V^\Psi \leq \inf_{ \varepsilon > 0} \Zc^{\Psi+\varepsilon } .
\end{equation}
The proof of Theorem \ref{th: main} is given in Section \ref{sec: proofs}.

\begin{rem}\label{rem: existence OC}
In the easier case where optimal controls exist for the auxiliary problem, as assumed in \cite{BPZ16}, similar arguments as in \cite{BPZ16} (and Section \ref{sec:noise})  directly prove that $\Zc^{\Psi} = V^{\Psi}$ and that an optimal control $\alpha^*$ associated to the auxiliary problem $\Yc^\Psi(V)$ is optimal for the original problem. 
However some difficulties arise when trying to remove this assumption. 
\end{rem}

\begin{rem}\label{rem: finite rhs}
If there exists $\varepsilon_0>0$ such that $V^{\Psi+\varepsilon_0} <+\infty$ then $\Zc^{\Psi+\varepsilon_0} \leq V^{\Psi+\varepsilon_0} < +\infty$ by Theorem \ref{th: main}. Thus the right-hand side of \eqref{eq: bounds} is finite. 

On the other hand, we need to be careful about the form of the constraint function if we want to use \eqref{eq: bounds}. There are cases in which one choice of $\Psi$ gives a infinite right-hand side in this equation but another representation of the constraint gives an finite right-hand side.
Let us  consider for instance a one-dimensional terminal constraint in law $\varphi(\P_{X_T^{\alpha}})\leq 0$, represented by 
\begin{equation*}\Psi:(t,\mu) \mapsto
	\varphi(\mu)\mathds{1}_{t=T}.
	\end{equation*}  
We see that the constraint $\Psi(t,\mu)+\varepsilon  \leq 0$ would never be verified for any $t<T$ and any $\varepsilon>0$,  hence $V^{\Psi+\varepsilon } =+\infty$ and $Z^{\Psi+\varepsilon } = +\infty$. 
In that case there would be a gap in \eqref{eq: bounds} and one wouldn't be able to conclude that $V^{\Psi}=Z^{\Psi}$.
\end{rem}

In view of the above example in Remark \ref{rem: finite rhs}, we introduce a modified constraint function in order to deal with discrete time constraints, and also with a.s. constraints. 
Given  a constraint function $\Psi(t,\mu)$, we define 
\begin{align}\label{eq: modified constraint}
    & \overline{\Psi}_\kappa(t,\mu):= \Psi(t,\mu) - \kappa   \mathds{1}_{\{\Psi(t,\mu) \leq 0\}}, 
\end{align}  
with $\kappa >0$. By observing that $\overline{\Psi}_\kappa(t,\mu)$ $\leq$ $0$ $\Leftrightarrow$ $\Psi(t,\mu)$ $\leq$ $0$,  it follows that 

\begin{equation}\label{psikappa}
	V^\Psi= V^{\overline{\Psi}_\kappa},\ \Yc^\Psi=\Yc^{\overline{\Psi}_\kappa},\ \Zc^\Psi=\Zc^{\overline{\Psi}_\kappa}.
\end{equation}

\begin{rem} \label{remVZ} 
Notice that by taking $\varepsilon_0 < \kappa$, and assuming that $V^\Psi<\infty$, 
we have $\Zc^{\overline{\Psi}_\kappa+\varepsilon_0 }$ $<$ $\infty$. Indeed, 
by applying Theorem \ref{th: main} to $\overline{\Psi}_\kappa$, we have  
$\Zc^{\overline{\Psi}_\kappa+\varepsilon_0 }$ $\leq$ 
$V^{\overline{\Psi}_\kappa+\varepsilon_0 }$.  Moreover, by observing that  
an admissible control for the original problem $V^\Psi$ is also admissible for the auxiliary problem with constraint function $\overline{\Psi}_\kappa+\varepsilon_0 $, by definition of $\overline{\Psi}_\kappa$, this implies that $V^{\overline{\Psi}_\kappa+\varepsilon_0 }$ $<$ $\infty$.
\end{rem}

\subsection{Representation of the value function}
Now we prove under some assumptions on the constraints 
the continuity property $\Zc^ {\overline{\Psi}_\kappa}$ $=$ $\inf_{ \varepsilon > 0} \Zc^{\overline{\Psi}_\kappa+\varepsilon}$ 
in order to obtain a characterization of the original value function $V^\Psi$. 
The result relies on convexity arguments, and holds true within the linear-convex model assumption: 

\vspace{3mm}

\begin{assumption}[Lin-Conv] \label{hyplinconv} 
The coefficients $b$ valued in $\R^d$, $\sigma$ $=$ $(\sigma_j)_{1\leq j\leq d}$ valued in $\R^{d\times d}$ of the controlled mean-field equation are in the linear form:
\begin{align*} 
b(t,x,a,\nu) &= \;  \beta(t) + B(t) x + C(t)  a +  \bar B(t)  \int x \nu(\di x,\di a) + \bar C(t) \int a \nu(\di x,\di a) \\
\sigma_j(t,x,a,\nu) &= \; \gamma_j(t)  + D_j(t) x + F_j(t) a + \bar D_j(t)  \int x \nu(\di x,\di a) + \bar F_j(t) \int a \nu(\di x,\di a), 
\end{align*}
for $(t,x,a,\nu)$ $\in$ $[0,T]\times\R^d\times A\times\Pc_2(\R^d\times A)$, with $A$ convex set of $\R^q$, and with  bounded measurable function  $\beta$, $\gamma_j$ $B$, $D_j$,  $C$, $F_j$, $\bar B$, $\bar D_j$, $\bar C$,  
and $\bar F_j$, $j$ $=$ $1,\ldots,d$, on $[0,T]$,  valued respectively on $\R^d$, 
$\R^d$, $\R^{d\times d}$, $\R^{d\times d}$, $\R^{d\times q}$, $\R^{d\times q}$, $\R^{d\times d}$, $\R^{d\times d}$, $\R^{d\times q}$, $\R^{d\times q}$. 
\vspace{1mm}

The cost functions $f$,  $g$ and the constraint functions  $\Psi_l$, $l$ $=$ $1,\ldots,k$, are in the form:
\begin{align*}
f(t,x,a,\nu) \; = \; \tilde f\big(t,x,a,\int x \nu(\di x,\di a),\int a \nu(\di x,\di a) \big) & \quad g(x,\mu) \; = \; \tilde g\big(x,\int x \mu(\di x) \big), \\
\Psi_l(t,\mu) \; = \; \tilde\Psi_l\big(t,\int x \mu(\di x) \big).  & 
\end{align*}
 for $(t,x,a,\mu,\nu)$ $\in$ $[0,T]\times\R^d\times A\times\Pc_2(\R^d)\times\Pc_2(\R^d\times A)$, where $\tilde f(t,.)$ is convex on $\R^d\times \R^q\times\R^d\times \R^q$, and $\tilde g$, $\tilde \Psi_l(t,.)$  are  convex on $\R^d\times\R^d$. 
 \end{assumption}
 
 \vspace{3mm}
 
\begin{lem}\label{lem: convexity of y}
Under  Assumption \ref{hyplinconv},  the function $(z,\varepsilon)\in\R\times\R\mapsto \Yc^{\Psi + \varepsilon }(z)$ is  convex.
\end{lem}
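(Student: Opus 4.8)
The plan is to show convexity of the map $(z,\varepsilon) \mapsto \Yc^{\Psi+\varepsilon}(z)$ by exhibiting it as an infimum over the control $\alpha$ of a family of functions that are jointly convex in $(z,\varepsilon,\alpha)$ in an appropriate sense, and then invoking the fact that the \emph{partial infimum} of a jointly convex function is convex. Concretely, I would first fix two pairs $(z_0,\varepsilon_0)$ and $(z_1,\varepsilon_1)$ together with two controls $\alpha^0,\alpha^1\in\Ac$, and for $\lambda\in[0,1]$ set $z_\lambda = (1-\lambda)z_0 + \lambda z_1$, $\varepsilon_\lambda = (1-\lambda)\varepsilon_0+\lambda\varepsilon_1$, and consider the convex combination of controls $\alpha^\lambda := (1-\lambda)\alpha^0 + \lambda\alpha^1$, which lies in $\Ac$ since $A$ is convex (Assumption \ref{hyplinconv}). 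The main structural input is that, under the linear dynamics, the state process is \emph{affine} in the control and initial data. Writing $X^{0}:=X^{\alpha^0}$, $X^1:=X^{\alpha^1}$, and $X^\lambda := X^{\alpha^\lambda}$, I would verify that, because $b$ and $\sigma_j$ are affine in $(x,a,\int x\,\nu,\int a\,\nu)$, the process $\bar X := (1-\lambda)X^0 + \lambda X^1$ solves exactly the same linear McKean-Vlasov SDE driven by $\alpha^\lambda$, so by strong uniqueness $X^\lambda = (1-\lambda)X^0 + \lambda X^1$ almost surely.

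With this linearity in hand, the second step is to propagate convexity through each of the three building blocks of $\Yc$. For the terminal cost, $\widehat g(\P_{X_T^{\alpha^\lambda}}) = \E[\tilde g(X_T^\lambda, \E[X_T^\lambda])]$; since $\tilde g$ is convex on $\R^d\times\R^d$ and $x\mapsto(x,\E[x])$ is affine, $X_T^\lambda = (1-\lambda)X_T^0+\lambda X_T^1$ gives convexity of $\widehat g$ along the combination by Jensen/convexity under expectation. The same affine-argument-plus-convex-integrand reasoning applies to $\widehat f(s,\P_{(X_s^{\alpha^\lambda},\alpha_s^\lambda)}) = \E[\tilde f(s,X_s^\lambda,\alpha_s^\lambda,\E[X_s^\lambda],\E[\alpha_s^\lambda])]$, so $Z_T^{z_\lambda,\alpha^\lambda} = z_\lambda - \int_0^T \widehat f(\cdots)\,\di s$ is a \emph{concave} function of the data, whence $\widehat g(\P_{X_T^{\alpha^\lambda}}) - Z_T^{z_\lambda,\alpha^\lambda}$ is convex; adding the affine term $\varepsilon_\lambda$ inside and taking the positive part $\{\cdot\}_+$ (convex, nondecreasing) preserves convexity. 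For the constraint term, each $\Psi_l + \varepsilon = \tilde\Psi_l(s,\E[X_s^\lambda]) + \varepsilon$ is convex in the data (composition of convex $\tilde\Psi_l(s,\cdot)$ with the affine map $\alpha\mapsto\E[X_s^\lambda]$ plus affine $\varepsilon$), the pointwise maximum over $l$ is convex, the positive part preserves convexity, and the supremum over $s\in[0,T]$ is again a pointwise supremum of convex functions, hence convex.

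Combining these, the bracketed quantity inside the infimum in \eqref{eq: auxiliary problem}, now evaluated with constraint $\Psi+\varepsilon$, is convex as a function of $(z,\varepsilon,\alpha)$ jointly, where I treat $\alpha$ as living in the convex subset $\Ac$ of a linear space and use that the admissible combination $\alpha^\lambda$ realizes the convex combination of states. The final step is the standard lemma that if $H(z,\varepsilon,\alpha)$ is jointly convex then $(z,\varepsilon)\mapsto \inf_{\alpha}H(z,\varepsilon,\alpha)$ is convex: one checks the defining inequality directly by choosing near-optimal $\alpha^0,\alpha^1$ for $(z_0,\varepsilon_0)$ and $(z_1,\varepsilon_1)$, forming $\alpha^\lambda$, and bounding $\Yc^{\Psi+\varepsilon_\lambda}(z_\lambda)$ by $H(z_\lambda,\varepsilon_\lambda,\alpha^\lambda)$, letting the optimization gaps tend to zero.

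The step I expect to be the main obstacle is the rigorous justification that the convex combination $\alpha^\lambda$ is admissible and that strong uniqueness genuinely forces $X^\lambda = (1-\lambda)X^0+\lambda X^1$ at the level of laws entering the McKean-Vlasov coefficients: one must be careful that the law arguments $\P_{(X_s^{\alpha^\lambda},\alpha_s^\lambda)}$ are themselves consistent with the affine combination, since the coefficients depend on the law through the \emph{mean} $\int x\,\nu$ and $\int a\,\nu$, which are linear functionals and therefore behave well, but this must be checked rather than assumed. The remaining manipulations (Jensen, positive part, pointwise sup, partial infimum) are routine convexity bookkeeping.
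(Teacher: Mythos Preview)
Your proposal is correct and follows essentially the same route as the paper: form the convex combination $\alpha^\lambda$ of two controls, use the linear McKean-Vlasov dynamics (with law-dependence only through means) together with strong uniqueness to get $X^{\alpha^\lambda}=(1-\lambda)X^{\alpha^0}+\lambda X^{\alpha^1}$, then push convexity of $\tilde f,\tilde g,\tilde\Psi_l$ and of $\{\cdot\}_+$, $\max$, $\sup$ through to obtain joint convexity of the cost $L^{\Psi+\varepsilon}(z,\alpha)$, and conclude by the partial-infimum-of-a-jointly-convex-function argument. The paper carries out exactly these steps, only phrasing the last one directly (bounding $\Yc^{\Psi+\varepsilon_\lambda}(z_\lambda)\le L^{\Psi+\varepsilon_\lambda}(z_\lambda,\alpha^\lambda)$ and taking the infimum over $\alpha^0,\alpha^1$ on the right) rather than via $\varepsilon$-optimal controls.
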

\noindent The proof of Lemma \ref{lem: convexity of y} is given in Section \ref{sec: proofs}. 

\begin{Pro}
Let Assumption \ref{hyplinconv} hold. Then, $\Yc^\Psi$ being convex, positive and non-increasing, if $\Zc^\Psi < \infty$ then $\Yc^\Psi$ is decreasing on $(-\infty,\Zc^\Psi]$ and $\Yc^\Psi(z) = 0$ on $[\Zc^\Psi,\infty).$
\end{Pro}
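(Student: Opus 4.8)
The plan is to assemble the three properties of $\Yc^\Psi$ that are recalled in the statement — convexity, positivity, and monotonicity — with the continuity coming from the $1$-Lipschitz bound. Convexity of $z\mapsto\Yc^\Psi(z)=\Yc^{\Psi+0}(z)$ is obtained by restricting the jointly convex map of Lemma \ref{lem: convexity of y} to the line $\varepsilon=0$; positivity is the observation $\Yc^\Psi\geq 0$ made after \eqref{eq: auxiliary problem}; and the non-increasing property together with continuity comes from Proposition \ref{prop: continuity of w}. Everything needed is therefore already available, and the proof is a matter of organizing these facts.

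First I would pin down the structure of the zero level-set $S:=\{z\in\R\ |\ \Yc^\Psi(z)=0\}$. Since $\Zc^\Psi=\inf S$ is assumed finite, $S$ is non-empty. Because $\Yc^\Psi$ is continuous (being $1$-Lipschitz) and nonnegative, $S$ is closed; because $\Yc^\Psi$ is non-increasing, part 2 of Proposition \ref{prop: continuity of w} shows $S$ is upward-closed (if $z_0\in S$ then $[z_0,\infty)\subseteq S$). A non-empty, closed, upward-closed subset of $\R$ with finite infimum $\Zc^\Psi$ is exactly $[\Zc^\Psi,\infty)$; in particular the infimum is attained, so $\Yc^\Psi(\Zc^\Psi)=0$ and $\Yc^\Psi\equiv 0$ on $[\Zc^\Psi,\infty)$. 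This is the second assertion.

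For the strict decrease on $(-\infty,\Zc^\Psi]$ I would argue by contradiction. Since $\Yc^\Psi$ is non-increasing, a failure of strict monotonicity produces $z_1<z_2\leq \Zc^\Psi$ with $\Yc^\Psi(z_1)=\Yc^\Psi(z_2)$. If $z_2=\Zc^\Psi$, then both values equal $\Yc^\Psi(\Zc^\Psi)=0$, so $z_1\in S$ with $z_1<\inf S$, a contradiction. If $z_2<\Zc^\Psi$, write $z_2=\lambda z_1+(1-\lambda)\Zc^\Psi$ with $\lambda\in(0,1)$ and invoke convexity together with $\Yc^\Psi(\Zc^\Psi)=0$: we get $\Yc^\Psi(z_2)\leq \lambda\,\Yc^\Psi(z_1)+(1-\lambda)\Yc^\Psi(\Zc^\Psi)=\lambda\,\Yc^\Psi(z_2)$, hence $(1-\lambda)\Yc^\Psi(z_2)\leq 0$, so $\Yc^\Psi(z_2)\leq 0$. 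With positivity this forces $\Yc^\Psi(z_2)=0$, placing $z_2\in S$ below $\inf S$ — again a contradiction. Thus no flat segment can occur on $(-\infty,\Zc^\Psi]$, which together with monotonicity gives strict decrease there.

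The only genuinely delicate point is the attainment $\Yc^\Psi(\Zc^\Psi)=0$: it is what converts the infimum $\Zc^\Psi$ into an actual zero of $\Yc^\Psi$, and it rests on continuity so that $S$ is closed and contains its infimum. Once that is secured, convexity does all the remaining work and both displayed conclusions follow at once. I expect no further obstacle, since each ingredient is established earlier in the paper and the argument is purely one-dimensional.
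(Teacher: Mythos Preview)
Your proof is correct. Both you and the paper use the trio convexity/positivity/monotonicity together with continuity, but the organization differs slightly: you first pin down $\Yc^\Psi(\Zc^\Psi)=0$ via the closed, upward-closed structure of the zero set, and then use that zero value as an anchor in a direct convex-combination inequality to rule out flat segments on $(-\infty,\Zc^\Psi]$. The paper proceeds in the reverse order: it assumes a flat segment with \emph{positive} value, observes that $0$ then lies in the subdifferential, and deduces from the subgradient inequality that $\Yc^\Psi$ is globally bounded below by this positive constant, contradicting $\Zc^\Psi<\infty$; only afterwards does it read off $\Yc^\Psi(\Zc^\Psi)=0$ from continuity. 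Your argument is marginally more elementary in that it avoids subdifferential language and uses only the defining inequality of convexity, while the paper's version is slightly more self-contained in that the strict-decrease step does not rely on having already located the zero. Substantively the two are equivalent one-dimensional convex-analysis arguments.
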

\begin{proof}
By contradiction, if $\Yc^\Psi(a) = \Yc^\Psi(b) > 0$ with $a<b$ 
then by monotonicity $\Yc^\Psi([a,b]) = \{\Yc^\Psi(a)\}$ and $0\in\partial \Yc^\Psi(a)$ thus $\Yc^\Psi(x)\geq \Yc^\Psi(a)>0,\ \forall x\in\R$  which is not the case because $\Zc^\Psi < \infty$. As a consequence, $\Yc^\Psi$ is decreasing. Then by continuity of $\Yc^\Psi$ and definition of $\Zc^\Psi$ we obtain $\Yc^\Psi(\Zc^\Psi)=0 $. 
\end{proof}

\begin{Theorem}\label{th: representation} 
Under Assumption \ref{hyplinconv},  assume  that $-\infty$ $<$ $V^\Psi$ $<$ $\infty$. Then we have the representation
\begin{equation*}
\Zc^{\Psi}  = V^{\Psi}.
\end{equation*}
 Moreover $\varepsilon$-optimal controls $\alpha^\varepsilon$ for the auxiliary problem $ \Yc^\Psi(V^\Psi)$ are $\varepsilon$-admissible $\varepsilon$-optimal controls for the original problem in the sense that 
\begin{align*}
    J(X_0,\alpha^\varepsilon) \leq  V^\Psi + \varepsilon,\ \sup_{0\leq s \leq T} \Psi(s,\P_{X_s^{\alpha^\varepsilon}}) \leq \varepsilon.
\end{align*} 
\end{Theorem}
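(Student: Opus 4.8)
The plan is to derive the representation $\Zc^{\Psi}=V^{\Psi}$ by collapsing the sandwich \eqref{eq: bounds}, and to obtain the statement on $\varepsilon$-optimal controls as an essentially free by-product. I would dispatch the control assertion first, since it is the easy part. As $-\infty<V^\Psi<\infty$, Theorem \ref{th: main}(2) gives $\Yc^\Psi(V^\Psi)=0$. Writing \eqref{eq: auxiliary problem} at $z=V^\Psi$ and using the identity $\widehat g(\P_{X_T^{\alpha}})-Z^{V^\Psi,\alpha}_T=J(X_0,\alpha)-V^\Psi$, which follows directly from \eqref{eq: definition Z} and the rewriting of $J$ in the proof of Lemma \ref{lem: target}, I would choose for each $\varepsilon>0$ an $\varepsilon$-minimizer $\alpha^\varepsilon\in\Ac$ with
\[ \{J(X_0,\alpha^\varepsilon)-V^\Psi\}_+ + \sup_{0\le s\le T}\{\Psi(s,\P_{X_s^{\alpha^\varepsilon}})\}_+ \le \varepsilon. \]
Both summands being nonnegative, each is $\le\varepsilon$, which is exactly $J(X_0,\alpha^\varepsilon)\le V^\Psi+\varepsilon$ and $\sup_s\Psi(s,\P_{X_s^{\alpha^\varepsilon}})\le\varepsilon$. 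Note this part uses neither Assumption \ref{hyplinconv} nor the representation itself.

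For the representation, the bound $\Zc^\Psi\le V^\Psi$ is Theorem \ref{th: main}(2), so the entire content is the reverse inequality $V^\Psi\le\Zc^\Psi$. First I would pass to the modified constraint: by \eqref{psikappa} it suffices to prove $\Zc^{\overline{\Psi}_\kappa}=V^{\overline{\Psi}_\kappa}$. Applying Theorem \ref{th: main}(3) to $\overline{\Psi}_\kappa$ and invoking Remark \ref{remVZ} (which guarantees $\Zc^{\overline{\Psi}_\kappa+\varepsilon_0}<\infty$ for $\varepsilon_0<\kappa$) furnishes the \emph{finite} upper bound $V^{\overline{\Psi}_\kappa}\le\inf_{\varepsilon>0}\Zc^{\overline{\Psi}_\kappa+\varepsilon}$. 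Everything then reduces to the continuity statement $\inf_{\varepsilon>0}\Zc^{\overline{\Psi}_\kappa+\varepsilon}=\Zc^{\overline{\Psi}_\kappa}$, i.e. the right-continuity at $\varepsilon=0$ of $\varepsilon\mapsto\Zc^{\overline{\Psi}_\kappa+\varepsilon}$; the inequality $\ge$ is free, since $\Psi'\le\Psi''$ implies $\Yc^{\Psi'}\le\Yc^{\Psi''}$ and hence $\Zc^{\Psi'}\le\Zc^{\Psi''}$, so $\varepsilon\mapsto\Zc^{\overline{\Psi}_\kappa+\varepsilon}$ is non-decreasing and its infimum over $\varepsilon>0$ equals its right-limit at $0$.

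The engine is Lemma \ref{lem: convexity of y}: the joint convexity of $(z,\varepsilon)\mapsto\Yc^{\Psi+\varepsilon}(z)$. Since $\Yc^{\Psi+\varepsilon}\ge0$, its zero set coincides with its $0$-sublevel set and is therefore a convex subset of $\R\times\R$; because $\Yc^{\Psi+\varepsilon}$ is non-increasing in $z$ (the preceding Proposition), the lower envelope $\varepsilon\mapsto\Zc^{\Psi+\varepsilon}$ of this convex set is itself convex. A finite convex function is continuous on the interior of its domain of finiteness, so as soon as $\varepsilon=0$ is interior to $\{\varepsilon:\Zc^{\Psi+\varepsilon}<\infty\}$ one gets $\inf_{\varepsilon>0}\Zc^{\Psi+\varepsilon}=\lim_{\varepsilon\to0^+}\Zc^{\Psi+\varepsilon}=\Zc^\Psi$ and the sandwich closes. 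Interiority can only fail when tightening the constraint makes the problem infeasible, exactly the terminal/discrete-time pathologies of Remark \ref{rem: finite rhs}; this is what the modification $\overline{\Psi}_\kappa$ is built to repair, keeping $\Zc^{\overline{\Psi}_\kappa+\varepsilon}$ finite on a full neighborhood of $0$, after which I would rerun the convexity/continuity argument with $\overline{\Psi}_\kappa$ in place of $\Psi$. Combining this with the finite upper bound yields $V^{\overline{\Psi}_\kappa}\le\Zc^{\overline{\Psi}_\kappa}$, hence $V^\Psi=\Zc^\Psi$.

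The hard part is precisely this no-gap step at $\varepsilon=0$: it says the value cannot jump as the constraint is infinitesimally relaxed, equivalently that the asymptotically feasible near-optimal controls realizing $\Yc^\Psi(\Zc^\Psi)=0$ can be upgraded to genuinely feasible ones at no extra cost. Convexity is indispensable, because absent it a convex-type perturbation/value function may still jump upward at the \emph{boundary} of its finiteness domain; Assumption \ref{hyplinconv}, through Lemma \ref{lem: convexity of y}, rules this out in the interior, and the $\overline{\Psi}_\kappa$ device moves the relevant point into that interior. The most delicate bookkeeping I would anticipate is twofold: checking that for $0<\varepsilon<\kappa$ the feasible set of $\overline{\Psi}_\kappa+\varepsilon$ coincides with that of $\Psi$ (so the upper bound is genuinely $V^\Psi$), and confirming that the convexity supplied by Lemma \ref{lem: convexity of y} can indeed be exploited on a two-sided neighborhood of $0$ after the modification — this being the one place where the argument is genuinely subtle rather than routine.
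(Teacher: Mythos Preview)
Your proposal is correct and follows essentially the same route as the paper: collapse the sandwich \eqref{eq: bounds} by passing to $\overline{\Psi}_\kappa$, using the joint convexity from Lemma \ref{lem: convexity of y} to obtain convexity of $\varepsilon\mapsto\Zc^{\overline{\Psi}_\kappa+\varepsilon}$, and invoking finiteness (Remark \ref{remVZ}) plus continuity of finite convex functions to close the gap at $\varepsilon=0$; the $\varepsilon$-optimal control statement is extracted from $\Yc^\Psi(V^\Psi)=0$ exactly as you do. Your ordering (controls first, noting they require neither Assumption \ref{hyplinconv} nor the representation) and your sublevel-set justification of the convexity of $\Zc^{\overline{\Psi}_\kappa+\cdot}$ are slightly more explicit than the paper's ``by direct verification'', and you are right to flag that transferring Lemma \ref{lem: convexity of y} to the modified constraint $\overline{\Psi}_\kappa$ is the one genuinely delicate step---the paper asserts it without further comment.
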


\begin{proof}[Proof of Theorem \ref{th: representation}]
 We prove the continuity of $\Zc^{\overline{\Psi}_\kappa}$ along the curve $\Zc^{\overline{\Psi}_\kappa+\varepsilon}$ for $\varepsilon\in\R$ where $\overline{\Psi}_\kappa$ is defined in \eqref{eq: modified constraint}. 

Let $\kappa>0$ and $\varepsilon_0<\kappa$. By Remark \ref{remVZ}, we know that $\Zc^{\overline{\Psi}_\kappa+\varepsilon_0}<\infty$. We consider 
the function
\begin{align*} 
\Phi(\varepsilon) & := \; \Zc^{\overline{\Psi}_\kappa+\varepsilon } \; = \; 
\inf\{ z \in \R: \Yc^{\overline{\Psi}_\kappa+\varepsilon}(z)  \leq  0\} \; < \; \infty, \quad \varepsilon < \varepsilon_0, 
\end{align*}
and observe by direct verification that it is convex on $(-\infty,\varepsilon_0)$, using Lemma \ref{lem: convexity of y}.
Moreover, by \eqref{eq: bounds} applied to $\bar\Psi_\kappa$, and $\eqref{psikappa}$, we have 
$\Phi(\varepsilon)$ $=$ $\Zc^{\bar\Psi_\kappa+\varepsilon}$  $\geq$ $V^{\bar\Psi_\kappa}$ $=$ 
$V^{\Psi}$ $>$ $-\infty$.    
Therefore, $\Phi$ is a convex, and finite function on $(-\infty,\varepsilon_0)$, hence it is continuous (see e.g. 
Corollary 10.1.1 in  \cite{rockConvexAnalysis}), in particular at $\varepsilon$ $=$ $0$.    
As a consequence $\Zc^ {\overline{\Psi}_\kappa} = \inf_{ \varepsilon > 0} \Zc^{\overline{\Psi}_\kappa+\varepsilon }$, and by Theorem \ref{th: main} applied to $\overline{\Psi}_\kappa$, we obtain $\Zc^{\overline{\Psi}_\kappa} = V^{\overline{\Psi}_\kappa}$. Then recalling that $\Zc^\Psi = \Zc^{\overline{\Psi}_\kappa},\ V^\Psi = V^{\overline{\Psi}_\kappa}$, 
 the result follows.  
 \\
 Concerning the controls, take $\varepsilon$ $>$ $0$, and consider an $\varepsilon$-optimal control $\alpha^{\varepsilon}\in\Ac$ such that
\begin{equation*}
    \{\widehat{g}(\P_{T}^{\alpha^{\varepsilon}})- Z^{\Zc^{\Psi},\alpha^{\varepsilon}}_T \}_+ +  \sup_{s\in[0,T]} \{\Psi(s,\P_{X_s^{\alpha^{\varepsilon}}})\}_+ \leq \varepsilon.
\end{equation*}   The two terms on the l.h.s. being non-negative, they both are smaller than $\varepsilon$  and thus \begin{equation*}
    \widehat{g}(\P_{T}^{\alpha^{\varepsilon}})\leq 
    Z^{\Zc^{\Psi },\alpha^{\varepsilon}}_T +\varepsilon,\ \mathrm{and}\  \Psi(s,\P_{X_s^{\alpha^{\varepsilon}}})\leq  \varepsilon,\  \forall\ s\in[0,T].
\end{equation*}
Hence $   J(X_0,\alpha^{\varepsilon})\leq \Zc^{\Psi} + \varepsilon = V^{\Psi} + \varepsilon$ and 
$\Psi( s,\P_{X_s^{\alpha^{\varepsilon}}})\leq \varepsilon ,\  \forall\ s\in[0,T].$
\end{proof}

\subsection{Proofs}\label{sec: proofs}

\begin{proof}[Proof of Proposition \ref{prop: continuity of w}]
1) By the inequalities $|\inf_u A(u) - \inf_u B(u)|\leq \sup_u |A(u) -  B(u)|$, $|\sup_u A(u) - \sup_u B(u)|\leq \sup_u |A(u) -  B(u)|$ 
we obtain for any 
$z,z'\in\R$
\begin{align}
    & |\Yc^\Psi(z) - \Yc^\Psi(z')| \\ &= \Big|\inf_{\alpha\in\Ac} \Big[\{\widehat{g}(\P_{X_T^{\alpha}})- Z^{z,\alpha}_{T}\}_+ 
    + \sup_{s\in[t,T]} \{\Psi(s,\P_{X_s^{\alpha}})\}_+ \Big] \\ & - \inf_{\alpha\in\Ac} \Big[\{\widehat{g}(\P_{X_T^{\alpha}})- Z^{z',\alpha}_{T}\}_+ + \sup_{s\in[t,T]} \{\Psi(s,\P_{X_s^{\alpha}})\}_+ \Big]\Big| \\
    & \leq \sup_{\alpha\in\Ac} \Big| \{\widehat{g}(\P_{X_T^{\alpha}})- Z^{z,\alpha}_{T}\}_+ -  \{\widehat{g}(\P_{X_T^{\alpha}})- Z^{z',\alpha}_{T}\}_+  +  \sup_{s\in[t,T]} \{\Psi(s,\P_{X_s^{\alpha}})\}_+-  \sup_{s\in[t,T]} \{\Psi(s,\P_{X_s^{\alpha}})\}_+\Big|\\
    & \leq  \sup_{\alpha\in\Ac} \Big| Z^{z,\alpha}_{T} -  Z^{z',\alpha}_{T} \Big| = |z - z'|, 
\end{align} 
by 1-Lipschitz continuity of $x\mapsto \{x\}_+$. 

2) Denote by 
\begin{align} \label{defL} 
L^\Psi(z,\alpha) &= \;  
\{\widehat{g}(\P_{X_T^{\alpha}})- Z^{z,\alpha}_{T}\}_+ +  \sup_{s\in[0,T]} \{\Psi(s,\P_{X_s^{\alpha}})\}_+, 
\end{align}
so that $\Yc^\Psi(z)$ $=$ $\inf_{\alpha\in\Ac} L^\Psi(z,\alpha)$. Then, it is clear that 

\begin{equation}
    z \leq z' \implies L^\Psi(z',\alpha) \leq L^\Psi(z,\alpha)
\end{equation} 
hence by minimizing, the same monotonicity property holds also for the value function 
\begin{equation}
    z \leq z' \implies \Yc^\Psi(z') \leq \Yc^\Psi(z).
\end{equation}
\end{proof}

\begin{proof}[Proof of Theorem \ref{th: main}]

1) $\exists\ \alpha\in\Ac,\ \widehat{g}(\P_{X_T^{\alpha}})\leq Z^{z,\alpha}_{T}$ and 
$ \Psi(s,\P_{X_s^{\alpha}})\leq 0,\  \forall\ s\in[0,T]$. Therefore
\begin{equation}
    \{\widehat{g}(\P_{X_T^{\alpha}})- Z^{z,\alpha}_{T}\}_+ +  \sup_{s\in[0,T]} \{\Psi(s,\P_{X_s^{\alpha}})\}_+ = 0
\end{equation} and by non-negativity of $\Yc$ we obtain $\Yc^\Psi(z) = 0$  
\newline

2) By continuity of $\Yc$ (Proposition \ref{prop: continuity of w})  
and  1),
we obtain $\Yc^\Psi(V^\Psi) = 0 $ by taking admissible 
$\varepsilon$-optimal controls for the original problem and taking the limit $\varepsilon\rightarrow 0$. By definition of  $\Zc^\Psi$ the property is established.\newline

3) We assume that exists $\varepsilon_0>0$ such that $\Zc^{\Psi+\varepsilon_0 }<+\infty$. 
If it is not the case then $\inf_{\varepsilon>0} \Zc^{\Psi + \varepsilon } = + \infty$ and the inequality is verified.  
Let $0<\varepsilon<\varepsilon_0$ satisfying  $\Zc^{\Psi + \varepsilon } <\infty$. 
By continuity of $\Yc$ in the $z$ variable (Proposition \ref{prop: continuity of w}), $\Yc^{\Psi +\varepsilon }(\Zc^{\Psi+\varepsilon }) = 0 $. 
Then by definition of $\Yc^{\Psi +\varepsilon }$, for $0<\varepsilon'\leq\varepsilon, \exists\ \alpha^{\varepsilon'}\in\Ac$ such that
\begin{equation}
    \{\widehat{g}(\P_{T}^{\alpha^{\varepsilon'}})- 
    Z^{\Zc^{\Psi +\varepsilon },\alpha^{\varepsilon'}}_{T} \}_+ +  \sup_{s\in[0,T]} \{\Psi(s,\P_{X_s^{\alpha^{\varepsilon'}}})+ \varepsilon \}_+ \leq \varepsilon'.
\end{equation} The two terms on the l.h.s. being non-negative, they both are smaller than $\varepsilon'$ and thus \begin{equation}
    \widehat{g}(\P_{T}^{\alpha^{\varepsilon'}})\leq 
    Z^{\Zc^{\Psi +\varepsilon },\alpha^{\varepsilon'}}_{T} +\varepsilon',\ \mathrm{and}\  \Psi(s,\P_{X_s^{\alpha}})\leq \varepsilon'- \varepsilon \leq 0,\  \forall\ s\in[0,T].
\end{equation}
Hence \begin{equation}
   J(\alpha^{\varepsilon'})\leq \Zc^{\Psi + \varepsilon } + \varepsilon'
\end{equation} and 
\begin{equation}
\Psi( s,\P_{X_s^{\alpha}})\leq 0 ,\  \forall\ s\in[0,T].
\end{equation} 
Therefore by arbitrariness of $\varepsilon'$ verifying $0<\varepsilon'<\varepsilon$ we conclude that $V^\Psi \leq \Zc^{\Psi + \varepsilon } $. By arbitrariness of $\varepsilon$ verifying $0<\varepsilon<\varepsilon_0$ it follows 
\begin{equation}
    V^\Psi \leq \inf_{\varepsilon \in (0,\varepsilon_0)} \Zc^{\Psi + \varepsilon } = \inf_{\varepsilon > 0} \Zc^{\Psi + \varepsilon }, 
\end{equation} where the last equality comes from the non-increasing property of $\Zc^{\Psi + \varepsilon }$ w.r.t. $\varepsilon.$
\end{proof}

\begin{proof}[Proof of Lemma \ref{lem: convexity of y}]
Under Assumption \ref{hyplinconv} on the linear dynamics of the controlled state process, we have  for all $\alpha$ $\in$ $\Ac$, $z$ $\in$ $\R$, $\varepsilon$ $\in$ $\R_+$, 
\begin{align*}
L^{\Psi + \varepsilon}(z,\alpha) &= \;  \Big\{ \E \big[ \tilde g(X_T^\alpha,\E[X_T^\alpha]) + \int_0^T \tilde f(s,X_s^\alpha,\alpha_s,\E[X_s^\alpha],\E[\alpha_s])\ \di s \big]  - z \Big\}_+  \\
& \quad \quad \quad + \;  \sup_{s\in[0,T]} \big\{ \tilde\Psi(s,\E[X_s^\alpha]) + \varepsilon\big\}_+. 
\end{align*}
Let $\alpha^1$, $\alpha^2$ be two arbitrary controls in $\Ac$, $z^1$, $z^2$ $\in$ $\R$, $\varepsilon^1$, $\varepsilon^2$ $\in$ $\R_+$, and $\lambda$ $\in$ $(0,1)$.  Define $\alpha$ $=$ $\lambda\alpha^1 + (1-\lambda)\alpha^2$,  and notice by the linear mean-field dynamics in Assumption \ref{hyplinconv} that 
$X^\alpha$ $=$ $\lambda X^{\alpha^1} + (1-\lambda) X^{\alpha^2}$.  Then, by the convexity assumption on $\tilde f$, $\tilde g$, and $\tilde \Psi$ in Assumption \ref{hyplinconv},  and the convexity of $x\mapsto \{x\}_+$, we have 
\begin{align*}
L^{\Psi + \lambda \varepsilon^1  + (1-\lambda)\varepsilon^2 }(\lambda z^1 + (1-\lambda) z^2,\alpha) & \leq \;  \lambda  L^{\Psi+\varepsilon^1 }(z^1,\alpha^1) + (1-\lambda)  L^{\Psi +\varepsilon^2 }(z^2,\alpha^2).
\end{align*}
By taking the infimum over $\alpha^1$, $\alpha^2$ in the r.h.s. of the above inequality, we deduce the required  convexity result: 
\begin{align*}
\Yc^{\Psi + \lambda \varepsilon^1  + (1-\lambda) \varepsilon^2}( \lambda z^1 + (1-\lambda) z^2) & \leq \; L^{\Psi + \lambda \varepsilon^1  + (1-\lambda)\varepsilon^2 }(\lambda z^1 + (1-\lambda) z^2,\alpha)  \\
& \leq \;  \lambda \Yc^{\Psi + \varepsilon^1}(z^1)  + (1-\lambda) \Yc^{\Psi + \varepsilon^2}(z^2). 
\end{align*} 
\end{proof}
\subsection{Potential extension towards dynamic programming}

If one wants to use dynamic programming in order to solve the auxiliary control problem, it requires to write it down under a Markovian dynamic formulation.  Define
\begin{align}
    X_s^{t,\xi,\alpha} & = \xi + \int_t^s b\big(u,X_u^{t,\xi,\alpha},\alpha_u,\P_{(X_u^{t,\xi,\alpha},\alpha_u)})\ \di u+ \int_t^s \sigma(u,X_u^{t,\xi,\alpha},\alpha_u,\P_{(X_u^{t,\xi,\alpha},\alpha_u)}\big)\ \di W_u, \label{eq: MKV dynamic}
\end{align}
for $t$ $\in$ $[0,T]$, and $\xi\in L^2(\Fc_t,\R^d)$, and notice that we have the flow property  
\begin{equation}
X_r^{t,\xi,\alpha} \; = \;  X_r^{s,X_s^{t,\xi,\alpha},\alpha}, \quad 
\P_{X_r^{t,\xi,\alpha}} \; = \;  \P_{X_r^{s,X_s^{t,\xi,\alpha},\alpha}},\ \forall\ 0\leq s\leq r \leq T,
\end{equation} 
coming from existence and pathwise uniqueness in \eqref{eq: MKV SDE}. We thus consider 
the cost function
\begin{align}
J(t,\xi,\alpha)  :=\ & \E\Big[\int_t^T f\big(s,X_s^{t,\xi,\alpha},\alpha_s, \P_{(X_s^{t,\xi,\alpha},\alpha_s)}\big)\ \di s + g\big(X_T^{t,\xi,\alpha}, \P_{X_T^{t,\xi,\alpha}}\big) \Big], 
\end{align} 
and the value function
\begin{align}
V(t,\xi) := \inf_{\alpha\in\Ac} \{ J(t,\xi,\alpha)\ |\   \Psi(s,\P_{X_s^{t,\xi,\alpha}})\leq 0,\ \forall\ s\in[t,T]  \}. 
\end{align}  
Then we introduce the auxiliary state variable 
\begin{align}\label{eq: definition Z dynamic}
    Z^{t,\xi,z,\alpha}_r  := z - \E\Big[\int_t^r f\big(s,X_s^{t,\xi,\alpha},\alpha_s, \P_{(X_s^{t,\xi,\alpha},\alpha_s)}\big)\ \di s\Big]= z - \int_t^r \widehat{f}\big(s,\P_{(X_s^{t,\xi,\alpha},\alpha_s)}\big)\ \di s, \quad t \leq r \leq T, 
\end{align}
and the auxiliary value function  is given by
\begin{align}
\Yc^\Psi(t,\xi, z) 
& = \inf_{\alpha\in\Ac} \Big[\{\widehat{g}(\P_{X_T^{t,\xi,\alpha}})- Z^\alpha_{t,\xi,z}(T)\}_+ + \sup_{s\in[t,u]} \{\Psi(s,\P_{X_s^{t,\xi,\alpha}})\}_+\Big) \Big]\\ & = :   \inf_{\alpha\in\Ac} L^\Psi(t,\xi, z,\alpha).\label{eq: auxiliary problem dynamic}
\end{align}
We can treat the non-Markovian formulation of this problem by introducing as in \cite{BPZ15} an additional state variable $Y_u^{t,\xi,\alpha,m}=\Big( \sup_{s\in[t,u]} \{\Psi(s,\P_{X_s^{t,\xi,\alpha}})\}_+\Big)\vee m\geq0$ for $u\geq t$  with $m\in\R$ and the value function
\begin{align}
\tilde{\Yc}^\Psi(t,\xi, z,m)  
= &\inf_{\alpha\in\Ac} \Big[\{\widehat{g}(\P_{X_T^{t,\xi,\alpha}})- Z^\alpha_{t,\xi,z}(T)\}_+ + Y_T^{t,\xi,\alpha,m} \Big] =: \inf_{\alpha\in\Ac} \overline{L}^\Psi(t,\xi, z,m,\alpha).\label{eq: auxiliary problem bis dynamic}
\end{align} 
The two problems are related by
\begin{equation}
\Yc^\Psi(t,\xi, z)  = \tilde{\Yc}^\Psi(t,\xi, z, \{\Psi(t,\P_{X_t^{t,\xi,\alpha}})\}_+).
\end{equation}
With this formulation, the problem \eqref{eq: auxiliary problem dynamic} becomes a Mayer-type Markovian optimal control problem in the augmented state space $[0,T] \times L^2(\Fc_0,\R^d) \times \R \times \R$. As mentioned in \cite{BPZ15}, this procedure is used for instance for hedging lookback options in finance, see e.g. \cite{GHT14}.  Now the infimum of the zero level-set is given by \begin{equation}\label{eq: definition Z K bis}
    \Zc^\Psi(t,\xi) := \inf \{z\in\R\ |\ \tilde{\Yc}^\Psi(t,\xi,z,0) = 0\}.
\end{equation} Indeed note that $\tilde{\Yc}^\Psi(t,\xi, z,m) = 0 \iff m\leq0\ \mathrm{and}\  \tilde{\Yc}^\Psi(t,\xi, z,0)=0.$

\paragraph{}
The Lipschitz and convexity properties of the value function are proven exactly as in Proposition \ref{prop: continuity of w} but we detail here the continuity in space and in the running maximum variable $m$.

\begin{assumption}\label{assumption : lipschitz dynamic}
$\Psi,f,g,b,\sigma$ are Lipschitz continuous uniformly with respect to to other variables. Namely exists $[\Psi], [f], [g], [b], [\sigma], L >0$ and locally bounded functions $h,\ell,\mathfrak{L}:[0,+\infty) \mapsto [0,+\infty)$ such that for any $t\in[0,T],\ x,x'\in\R^d, \mu \in\Pc_2(\R^d), \nu,\nu'\in\Pc_2(\R^d\times A)$, $a\in A$
  \begin{align}
      |\Psi(t,\mu) - \Psi(t,\mu')| & \leq [\Psi] \Wc_2(\mu,\mu')\\
      |f(t,x,a,\nu) - f(t,x',a,\nu')| & \leq [f] (|x-x'|+\Wc_2(\nu,\nu'))\\
      |g(x,\mu) - g(x,\mu')| & \leq [g] (|x-x'|+\Wc_2(\mu,\mu'))\\
      |b(t,x,a,\nu) - b(t,x',a,\nu')| & \leq [b] (|x-x'| + \Wc_2(\nu,\nu'))\\
        |\sigma(t,x,a,\nu) - \sigma(t,x',a,\nu')| & \leq [\sigma] (|x-x'| + \Wc_2(\nu,\nu'))\\
       |b(t,0,a,\delta_{0}\otimes\mu)| + |\sigma(t,0,a,\delta_{0}\otimes\mu)| + |f(t,0,a,\delta_{0}\otimes\mu)| & \leq L\\
       |f(t,x,a,\nu)|  & \leq  h(\|\nu\|_2)(1+|x|^2)\\
        |g(x,\mu)| & \leq  \ell(\|\mu\|_2)(1+|x|^2)\\
       |\Psi(t,\mu)| & \leq \mathfrak{L}(\|\mu\|_2).
  \end{align}
\end{assumption}
\begin{Pro}\label{pro: continuity dynamic}
  Under Assumption \ref{assumption : lipschitz dynamic} $\tilde{\Yc}^\Psi$ is Lipschitz continuous: there exists $C>0$ such that for any $t\in[0,T],\ \xi,\xi'\in L^2(\Fc_t,\R^d), m,m'\in\R$
  \begin{equation}
      |\tilde{\Yc}^\Psi(t,\xi,z,m) - \tilde{\Yc}^\Psi(t,\xi',z',m')| \leq | z  -  z' |+ |m-m'| + C \sqrt{\E|\xi-\xi'|^2}.
  \end{equation}
\end{Pro}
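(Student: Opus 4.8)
The plan is to bound the difference of the two infima by the supremum over controls of the difference of the integrands, and then to estimate that difference term by term. Writing $\tilde{\Yc}^\Psi(t,\xi,z,m) = \inf_{\alpha\in\Ac}\overline{L}^\Psi(t,\xi,z,m,\alpha)$ as in \eqref{eq: auxiliary problem bis dynamic}, the elementary inequality $|\inf_u A(u) - \inf_u B(u)| \leq \sup_u|A(u) - B(u)|$ gives
\begin{equation*}
|\tilde{\Yc}^\Psi(t,\xi,z,m) - \tilde{\Yc}^\Psi(t,\xi',z',m')| \leq \sup_{\alpha\in\Ac}|\overline{L}^\Psi(t,\xi,z,m,\alpha) - \overline{L}^\Psi(t,\xi',z',m',\alpha)|,
\end{equation*}
so it suffices to bound the right-hand side uniformly in $\alpha$. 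Recalling that $\overline{L}^\Psi$ is the sum of the terminal term $\{\widehat{g}(\P_{X_T^{t,\xi,\alpha}}) - Z^{t,\xi,z,\alpha}_T\}_+$ and the running-maximum term $Y_T^{t,\xi,\alpha,m} = (\sup_{s\in[t,T]}\{\Psi(s,\P_{X_s^{t,\xi,\alpha}})\}_+)\vee m$, I would treat the two terms separately via the triangle inequality.

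For the terminal term, since $x\mapsto\{x\}_+$ is $1$-Lipschitz,
\begin{align*}
&\big|\{\widehat{g}(\P_{X_T^{t,\xi,\alpha}}) - Z^{t,\xi,z,\alpha}_T\}_+ - \{\widehat{g}(\P_{X_T^{t,\xi',\alpha}}) - Z^{t,\xi',z',\alpha}_T\}_+\big| \\
&\qquad \leq |\widehat{g}(\P_{X_T^{t,\xi,\alpha}}) - \widehat{g}(\P_{X_T^{t,\xi',\alpha}})| + |z-z'| + \int_t^T|\widehat{f}(s,\P_{(X_s^{t,\xi,\alpha},\alpha_s)}) - \widehat{f}(s,\P_{(X_s^{t,\xi',\alpha},\alpha_s)})|\ \di s,
\end{align*}
where $|z-z'|$ comes out with coefficient one because the $z$- and $\xi$-dependences of $Z^{t,\xi,z,\alpha}_T$ separate additively. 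For the running-maximum term, the inequalities $|a\vee m - a'\vee m'|\leq |a-a'|+|m-m'|$ and $|\sup_s A_s - \sup_s B_s|\leq\sup_s|A_s-B_s|$, together with the $1$-Lipschitz continuity of $\{\cdot\}_+$ and of $\Psi$ in $\Wc_2$ (Assumption \ref{assumption : lipschitz dynamic}), yield
\begin{equation*}
|Y_T^{t,\xi,\alpha,m} - Y_T^{t,\xi',\alpha,m'}| \leq |m-m'| + [\Psi]\sup_{s\in[t,T]}\Wc_2(\P_{X_s^{t,\xi,\alpha}},\P_{X_s^{t,\xi',\alpha}}).
\end{equation*}
It then remains to control the $\widehat{g}$, $\widehat{f}$ and $\Wc_2$ contributions. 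Writing $\widehat{g}(\P_X) = \E[g(X,\P_X)]$ and $\widehat{f}(s,\P_{(X_s,\alpha_s)}) = \E[f(s,X_s,\alpha_s,\P_{(X_s,\alpha_s)})]$, and using the Lipschitz bounds on $g$ and $f$ together with $\Wc_2(\P_X,\P_{X'})\leq(\E|X-X'|^2)^{1/2}$ (valid since the two flows are coupled on the same space through the same $\alpha$ and $W$), each is controlled by $\sup_{s\in[t,T]}(\E|X_s^{t,\xi,\alpha} - X_s^{t,\xi',\alpha}|^2)^{1/2}$.

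The key and only genuinely technical step is then the uniform-in-$\alpha$ stability estimate for the McKean-Vlasov flow,
\begin{equation*}
\E\Big[\sup_{s\in[t,T]}|X_s^{t,\xi,\alpha} - X_s^{t,\xi',\alpha}|^2\Big] \leq C\,\E|\xi-\xi'|^2,
\end{equation*}
with $C$ depending only on $T$ and the Lipschitz constants $[b],[\sigma]$, not on $\alpha$. This is obtained in the standard way from \eqref{eq: MKV dynamic}: subtract the two equations, apply the Burkholder-Davis-Gundy inequality to the stochastic integral, use the Lipschitz estimates on $b$ and $\sigma$ in $(x,\nu)$ together with $\Wc_2(\P_{(X_s,\alpha_s)},\P_{(X'_s,\alpha_s)})^2\leq\E|X_s-X'_s|^2$ (the control components coincide), and close the loop with Gronwall's lemma. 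Combining this with the two preceding displays, all $\xi$-dependent contributions collect into a single term $C\sqrt{\E|\xi-\xi'|^2}$, while $|z-z'|$ and $|m-m'|$ appear with coefficient one; taking the supremum over $\alpha\in\Ac$ and using the first display yields the claim. The main obstacle is purely the bookkeeping ensuring that the flow constant $C$ is genuinely uniform over admissible controls, which holds precisely because the Lipschitz constants in Assumption \ref{assumption : lipschitz dynamic} do not depend on $a$.
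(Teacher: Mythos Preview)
Your proof is correct and follows essentially the same route as the paper: bound the difference of infima by the supremum over $\alpha$ of the difference of costs, split into the terminal and running-maximum terms using the $1$-Lipschitz property of $\{\cdot\}_+$ and the inequality $|a\vee m - a'\vee m'|\leq |a-a'|+|m-m'|$, then close with the Lipschitz assumptions on $g,f,\Psi$ and the standard uniform-in-$\alpha$ flow stability estimate for the McKean-Vlasov SDE. The only cosmetic difference is that the paper cites the flow estimate from the literature while you sketch its proof via BDG and Gronwall.
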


\begin{proof}[Proof of Proposition \ref{pro: continuity dynamic}]
	By the inequalities $|\inf_u A(u) - \inf_u B(u)|\leq \sup_u |A(u) -  B(u)|$, $|\sup_u A(u) - \sup_u B(u)|\leq \sup_u |A(u) -  B(u)|$, and $|a\vee b - c\vee d|\leq |a-c|\vee |b-d| \leq |a-c| + |b-d|$  we obtain for any $\xi,\xi'\in L^2(\Fc_t,\R^d)$  (if $\Psi$ is not continuous consider $\xi=\xi'$) 
	\begin{align}
	& |\tilde{\Yc}^\Psi(t,\xi,z,m) - \tilde{\Yc}^\Psi(t,\xi',z',m')| \\ 
	& \leq \sup_{\alpha\in\Ac} | \{\widehat{g}(\P_{X_T^{t,\xi,\alpha}})- Z^{t,\xi,z,\alpha}_{T} \}_+ -  \{\widehat{g}(\P_{X_T^{t,\xi',\alpha}})- Z^{t,\xi',z',\alpha}_{T} \}_+  + 
	\\ & 
	\sup_{s\in[t,T]} \{\Psi(s,\P_{X_s^{t,\xi,\alpha}})\}_+\vee m - \sup_{s\in[t,T]} \{\Psi(s,\P_{X_s^{t,\xi',\alpha}})\}_+\vee m' |\\
	& \leq \sup_{\alpha\in\Ac} (| \widehat{g}(\P_{X_T^{t,\xi,\alpha}})- \widehat{g}(\P_{X_T^{t,\xi',\alpha}})| + | Z^{t,\xi,z,\alpha}_{T} 
	-  Z^{t,\xi',z',\alpha}_{T} | + | \sup_{s\in[t,T]} \{\Psi(s,\P_{X_s^{t,\xi,\alpha}})\}_+ - \sup_{s\in[t,T]}\{ \Psi(s,\P_{X_s^{t,\xi',\alpha}})\}_+ |\\ & +|m-m'|)
	\\
	& \leq \sup_{\alpha\in\Ac} \Big(|\E[g(X_T^{t,\xi,\alpha},\P_{X_T^{t,\xi,\alpha}}) - g(X_T^{t,\xi',\alpha},\P_{X_T^{t,\xi',\alpha}})] | 
	+ | z  -  z' | \\ &+ \Big| \E\Big[\int_t^T f\big(s,X_s^{t,\xi,\alpha},\alpha_s,\P_{(X_s^{t,\xi,\alpha},\alpha_s)}\big)\ \di s -  \int_t^T f\big(s,X_s^{t,\xi',\alpha},\alpha_s,\P_{(X_s^{t,\xi',\alpha},\alpha_s)}\big)\ \di s \Big]\Big|\Big) \\ & + \sup_{\alpha\in\Ac} \sup_{s\in[t,T]} |  \{\Psi(s,\P_{X_s^{t,\xi,\alpha}})\}_+ -  \{\Psi(s,\P_{X_s^{t,\xi',\alpha}})\}_+ |+|m-m'|\\
	&  \leq [\widehat{g}] \sup_{\alpha\in\Ac} ( \E|X_T^{t,\xi,\alpha}-X_T^{t,\xi',\alpha}| +  \Wc_2(\P_{X_T^{t,\xi,\alpha}},\P_{X_T^{t,\xi',\alpha}})) + | z  -  z' |+ |m-m'| + [\Psi] \sup_{\alpha\in\Ac} \sup_{s\in[t,T]} \Wc_2(\P_{X_s^{t,\xi,\alpha}},\P_{X_s^{t,\xi',\alpha}})   \\ &+ T [f] \sup_{\alpha\in\Ac} \{   \E[\sup_{s\in[t,T]} |X_s^{t,\xi,\alpha}-X_s^{t,\xi',\alpha}|] + \sup_{s\in[t,T]}\Wc_2(\P_{X_s^{t,\xi,\alpha}},\P_{X_s^{t,\xi',\alpha}}) \} ,
	\end{align} 
	by Lipschitz continuity of $\Psi, x\mapsto \{x\}_+$. We recall the estimates \begin{align}
	    & \sup_{s\in[t,T]} \Wc_2(\P_{X_s^{t,\xi,\alpha}},\P_{X_s^{t,\xi',\alpha}})  = \sqrt{\sup_{s\in[t,T]} \Wc_2(\P_{X_s^{t,\xi,\alpha}},\P_{X_s^{t,\xi',\alpha}})^2} \leq C\sqrt{\E|\xi - \xi' |^2}\\
	    &  \E[\sup_{s\in[t,T]}|X_s^{t,\xi,\alpha}-X_s^{t,\xi',\alpha}| ] \leq C\E|\xi - \xi'|\leq C \sqrt{\E|\xi-\xi'|^2},
 	\end{align} obtained by standard arguments (see e.g. the proof of Proposition 3.3 in \cite{CP19}). Then the result follows.
	
\end{proof}

\begin{Pro}[Law invariance properties]\label{prop: law invariance} 
	Under Assumption \ref{assumption : lipschitz dynamic}, we have law invariance of $\tilde{\Yc}^\Psi $ and $\Zc^\Psi$, namely if $\xi, \eta$ are $\Fc_t$-adapted square integrable with the same law, for any $(t,z,m)\in[0,T]\times\R\times\R$
	\begin{align}
	& \tilde{\Yc}^\Psi(t,\xi,z,m) = \tilde{\Yc}^\Psi(t,\eta,z,m)\\
	& \Zc^\Psi(t,\xi) = \Zc^\Psi(t,\eta).
	\end{align}
	Therefore we can define the lifted functions $y^\Psi, z^\Psi$ on $[0,T]\times\Pc_2(\R^d)\times \R$ (respectively $[0,T]\times\Pc_2(\R^d)$ by  $y^\Psi(t,\P_\xi,z,m) := \tilde{\Yc}^\Psi(t,\xi,z,m)$ and $z^\Psi(t,\P_\xi,z,m) := \Zc^\Psi(t,\xi,z,m)$.
\end{Pro}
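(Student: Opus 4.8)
The plan is to show that the cost $\overline{L}^\Psi(t,\xi,z,m,\alpha)$ depends on the pair $(\xi,\alpha)$ only through the flow of joint laws $\mu_s^\alpha := \P_{(X_s^{t,\xi,\alpha},\alpha_s)}$, $s\in[t,T]$, and then to argue that the family of attainable law-flows is the same for $\xi$ and for $\eta$ whenever $\P_\xi=\P_\eta$. The first point is read off directly from the definitions: $\widehat{g}(\P_{X_T^{t,\xi,\alpha}})$, the terminal auxiliary state $Z^\alpha_{t,\xi,z}(T)=z-\int_t^T\widehat{f}(s,\mu_s^\alpha)\,\di s$, and $Y_T^{t,\xi,\alpha,m}=\big(\sup_{s\in[t,T]}\{\Psi(s,\P_{X_s^{t,\xi,\alpha}})\}_+\big)\vee m$ are all functionals of $(t,z,m)$ and of $(\mu_s^\alpha)_s$ alone. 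Hence $\tilde{\Yc}^\Psi(t,\xi,z,m)=\inf_{\alpha\in\Ac}\overline{L}^\Psi$ is determined by the set $\mathcal{M}(t,\xi):=\{(\mu_s^\alpha)_{s\in[t,T]}:\alpha\in\Ac\}$, so it suffices to prove $\mathcal{M}(t,\xi)=\mathcal{M}(t,\eta)$; by symmetry one inclusion is enough.

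For the inclusion, given $\alpha\in\Ac$ associated with $\xi$, I would transport it to a control associated with $\eta$ producing the same law-flow. Since $\Fc_0$ is atomless and rich, $(\Omega,\Fc_t,\P)$ is an atomless standard space, so there is a measure-preserving automorphism $\tau_1$ of $(\Omega,\Fc_t,\P)$ with $\eta=\xi\circ\tau_1$ a.s.; extend it to $\tau$ equal to $\tau_1$ on $\Fc_t$ and to the identity on the increments $(W_r-W_t)_{r\ge t}$, which are independent of $\Fc_t$. Set $\alpha':=\alpha\circ\tau$; since $\tau$ preserves each $\Fc_s=\Fc_t\vee\sigma(W_r-W_t:t\le r\le s)$, the process $\alpha'$ is again progressively measurable and $A$-valued, hence $\alpha'\in\Ac$. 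The key step is then that $X^{t,\xi,\alpha}\circ\tau$ solves the standard decoupled SDE with initial datum $\eta$, control $\alpha'$, driving noise $W$, and frozen laws $(\mu_s^\alpha)_s$; because $\tau$ is measure-preserving, the self-consistency $\P_{(X_s^{t,\xi,\alpha}\circ\tau,\,\alpha'_s)}=\mu_s^\alpha$ holds, so $X^{t,\xi,\alpha}\circ\tau$ is in fact the McKean-Vlasov solution $X^{t,\eta,\alpha'}$ and has the same flow of laws. This yields $(\mu_s^\alpha)_s\in\mathcal{M}(t,\eta)$ and therefore $\tilde{\Yc}^\Psi(t,\xi,z,m)=\tilde{\Yc}^\Psi(t,\eta,z,m)$.

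The statement for $\Zc^\Psi$ is then immediate: by \eqref{eq: definition Z K bis}, $\Zc^\Psi(t,\xi)=\inf\{z\in\R:\tilde{\Yc}^\Psi(t,\xi,z,0)=0\}$ depends on $\xi$ only through the map $z\mapsto\tilde{\Yc}^\Psi(t,\xi,z,0)$, which coincides with $z\mapsto\tilde{\Yc}^\Psi(t,\eta,z,0)$ by the previous paragraph. Finally, law invariance means precisely that the values are unchanged when $\xi$ is replaced by any $\eta$ with $\P_\eta=\P_\xi$, so $y^\Psi(t,\P_\xi,z,m):=\tilde{\Yc}^\Psi(t,\xi,z,m)$ and $z^\Psi(t,\P_\xi):=\Zc^\Psi(t,\xi)$ are well-defined as functions of the law.

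I expect the main obstacle to be the rigorous construction and use of the map $\tau$: one must build it on the atomless standard space $(\Omega,\Fc_t,\P)$, arrange that it fixes the future Brownian increments so that $W$-measurability and the progressive measurability of $\alpha'$ are preserved, and then verify that the McKean-Vlasov fixed point transfers correctly, i.e. the self-consistency of the frozen-law decoupled SDE under $\tau$, invoking pathwise uniqueness. Alternatively, this transfer can be quoted from the law-invariance results for McKean-Vlasov control in the cited literature, and if desired the continuity estimate of Proposition \ref{pro: continuity dynamic} lets one reduce to a dense class of initial conditions.
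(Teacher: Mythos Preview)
Your strategy is the same as the paper's: show that the cost $\overline{L}^\Psi$ depends only on the law-flow $(\P_{(X_s,\alpha_s)})_{s\in[t,T]}$, and then prove that the sets of attainable law-flows from $\xi$ and from $\eta$ coincide by transferring any control $\alpha$ for $\xi$ into a control for $\eta$ producing the same joint law. The paper, however, implements the transfer differently. Rather than building a measure-preserving automorphism $\tau$ of $(\Omega,\Fc_t,\P)$, it invokes Theorem~3.5 and Lemma~B.2 of \cite{CGKPR20}: any open-loop control admits (in law) a representation $\alpha_s = a_s(\xi,U_\xi)$ for some measurable $a$ and an $\Fc_t$-measurable uniform variable $U_\xi$ independent of $\xi$; one then sets $\beta_s := a_s(\eta,U_\eta)$ with $U_\eta$ uniform and independent of $\eta$, and obtains equality in law of the full quadruples $(X,Z,Y,\alpha)$ and $(X',Z',Y',\beta)$. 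The paper also uses the continuity of the value function (Proposition~\ref{pro: continuity dynamic}) as part of the \cite{CGKPR20} machinery.

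The difference is genuine but not dramatic. Your automorphism route, if it goes through, is slightly cleaner in that it yields pathwise (not just in-law) identification and bypasses the continuity argument. Its weak point---which you correctly flag---is the existence of $\tau_1$ with $\eta=\xi\circ\tau_1$: on a standard atomless space two equidistributed random variables are \emph{not} in general related by an automorphism without further structural assumptions (one typically needs atomlessness of the conditional laws given $\xi$ and $\eta$, or an explicit product decomposition). The \cite{CGKPR20} device with an auxiliary uniform $U$ is precisely the standard way to sidestep this obstruction, and since you already mention quoting the cited law-invariance results as a fallback, your proposal effectively converges to the paper's proof at that point.
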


\begin{proof}
	Apply the same arguments as in Theorem 3.5. from \cite{CGKPR20} to the unconstrained Markovian value function $\Yc^\Psi$ on the extended state space. 
	In particular use the continuity of $\Yc^\Psi$ from Proposition \ref{prop: continuity of w}  
	and notice for a given control $\alpha$ that in Step 1 of Theorem 3.5.  from \cite{CGKPR20} the equality in law
	\begin{align}
	& ((X_s^{t,\xi,\alpha})_{s\in[t,T]}, (Z^{t,\xi,z,\alpha}_{s})_{s\in[t,T]},(Y_u^{t,\xi,\alpha,m})_{s\in[t,T]}, (\alpha_s)_{s\in[t,T]}) \\  \overset{\mathcal{L}}{=}\ & ((X_s^{t,\eta,\beta})_{s\in[t,T]}, (Z^{\beta,t,\eta,z})_{s\in[t,T]},(Y_u^{t,\eta,\beta,m})_{s\in[t,T]}, (\beta_s)_{s\in[t,T]}),
	\end{align} 
	holds true with $a_s $ defined in Lemma B.2. from \cite{CGKPR20} (verifying in particular the equality in law $\alpha_s = a_s(\xi,U_\xi)$) and $\beta_s = a_s(\eta,U_\eta) $  where $U_\eta$ (respectively $U_\xi$) is a $\Fc_t$-adapted uniform random variable on $[0,1]$ independent of $\eta$ (respectively $\xi$).
	Then use the definition \eqref{eq: definition Z K} to obtain the same law invariance property for $\Zc^\Psi$ too.
\end{proof}

Theorem \ref{th: main} and Theorem \ref{th: representation} are still valid in the the dynamic case, by applying the exact same arguments. More precisely for any $(t,\xi)\in [0,T]\times L^2(\Fc_t,\R^d)$, if $V^\Psi(t,\xi)<\infty$ then
\begin{equation}
    \Zc^\Psi(t,\xi) \leq V^\Psi(t,\xi) \leq \inf_{ \varepsilon > 0} \Zc^{\Psi+\varepsilon }(t,\xi) .
\end{equation} Similarly, arguments like in Theorem \ref{th: representation} prove that
\begin{equation}
    \Zc^\Psi(t,\xi) = V^\Psi(t,\xi),
\end{equation} if $V^\Psi(t,\xi)<\infty$.

If the value function is law invariant (see Proposition \ref{prop: law invariance}) and Theorem \ref{th: representation} holds true, we expect $y$ to be formally (by combining arguments from \cite{BPZ15,CP19}) characterized by a Master Bellman equation in Wassertein space with oblique derivative boundary conditions.

\section{An alternative auxiliary problem}\label{sec: integral}

We study the constrained McKean-Vlasov control problem
\begin{align}
    V & : = \;  \inf_{\alpha\in\Ac} 
    \big\{J(X_0,\alpha)  :  \Psi(t,\P_{X_t^{\alpha}})\leq 0,\ \forall\ t\in[0,T],\ 
    \varphi(\P_{X_T^\alpha}) \leq 0 \big\}, 
\end{align}
where we now assume that the running constraint $\Psi$ is continuous 
(hence, no discrete time constraints, see Remark \ref{rem: discrete}), and with a terminal 
constraint function $\varphi$. 
We now consider an alternative auxiliary control problem as in \cite{BPZ16}: 
\begin{align}\label{eq: auxiliary problem int}
  w(z) := \inf_{\alpha\in\Ac} \Big[\{\widehat{g}(\P_{X_T^{\alpha}})- Z^{z,\alpha}_{T} \}_+ +  \int_0^T \{\Psi(s,\P_{X_s^{\alpha}})\}_+\ \di s +\{\varphi(\P_{X_T^{\alpha}})\}_+ \Big]. 
\end{align}
Compared to the control problem \eqref{eq: auxiliary problem} of the previous section, the    penalization term of the constrained function $\Psi$ is in integral form instead of a supremum form.  It follows that this problem is is Markovian with respect to the variables $X_t$, $\P_{X_t}$ and $Z_t$, and we shall show that it also provides a similar representation of the value function by its zero level set: 
\begin{align}
V &= \; \inf\{ z \in \R: w(z) = 0 \},    
\end{align}
but under the additional assumption that optimal controls do exist. Actually, we prove this result in the more general case with common noise in the next section. 

The mean-field  control problem \eqref{eq: auxiliary problem int} is Markovian with respect to 
the state variables $(X_t^\alpha$, $\P_{X_t^\alpha}$, $Z^{z,\alpha}_t)$, and it is known from  \cite{CGKPR20} that the infimum  
over open-loop controls $\alpha$ in $\Ac$ can be taken equivalently over randomized feedback policies, i.e. controls $\alpha$ in the form: $\alpha_t$ $=$ $\mfa(t,X_t^\alpha,\P_{X_t^\alpha},Z_t^{z,\alpha},U)$, for some deterministic function $\mfa$ from 
$[0,T]\times\R^d\times\Pc(\R^d)\times\R\times[0,1]$ into $A$, where $U$ is an $\Fc_0$-measurable uniform random variable on $[0,1]$. 

Let us now discuss conditions under which the infimum in \eqref{eq: auxiliary problem int} can be taken equivalently over 
(deterministic) feedback policies, i.e. for controls $\alpha$ in the form: $\alpha_t$ $=$ $\mfa(t,X_t^\alpha,\P_{X_t^\alpha},Z^{z,\alpha}_t)$, for some deterministic function $\mfa$ from 
$[0,T]\times\R^d\times\Pc(\R^d)\times\R$ into $A$. This will be helpful for numerical purpose in Section \ref{sec:numerical}. 
We assume on top of Assumption \ref{assumption : lipschitz dynamic} that the running cost $f$, the drift $b$ and the volatility coefficient $\sigma$ do not depend on the law of the control process. We also assume that the running cost $f$ $=$ $f(t,x,\mu)$ does not depend on the control argument. The terminal constraint function $\varphi$ should also verify the same assumptions as the terminal cost function $g$, namely Lipschitz continuity and local boundedness (see Assumption \ref{assumption : lipschitz dynamic}).

In this case, the corresponding dynamic auxiliary problem of \eqref{eq: auxiliary problem int} is written as 
\begin{align}\label{eq: aux closed loop}
\mrw(t,\mu,z) &= \inf_{\alpha\in\Ac}\  \Big[\{\widehat{g}(\P_{X_T^{t,\xi,\alpha}})- Z^{t,\xi,z,\alpha}_{T}\}_+ +   \int_0^T \{\Psi(s,\P_{X_s^{t,\xi,\alpha}})\}_+\ \di s + \{\varphi(\P_{X_T^{t,\xi,\alpha}} )\}_+ \Big]\\
X_r^{t,\xi,\alpha}  &= \xi + \int_t^r b\big(s,X_s^{t,\xi,\alpha},\alpha_s,\P_{X_s^{t,\xi,\alpha}})\ \di s + \int_t^r \sigma\big(s,X_s^{t,\xi,\alpha},\alpha_t,\P_{X_s^{t,\xi,\alpha}}\big)\ \di W_s, \quad \xi \sim \mu,  \\
Z^{t,\xi,z,\alpha}_{r} &= z - \int_t^r \bar{f}\big(s,\P_{X_s^{t,\xi,\alpha}}\big)\ \di s, \quad r\geq t, 
\end{align}
where  $\bar f$ is the function defined on  $[0,T]\times\Pc_2(\R^d)$ by  $\bar f(t,\mu)$ $=$  
$\int_{\R^d} f\big(t,x, \mu\big)\ \mu(\di x)$. Note that we have applied Theorem 3.5 from \cite{CGKPR20} to obtain the law invariance of the auxiliary value function which can be written as a function of the measure $\mu$.
From Theorem 3.5, Proposition 5.6. 2), and equation (5.17) in \cite{CGKPR20} (see also Remark 5.2. from \cite{CP19} and Section 6 in \cite{PW18}) we see that the Bellman equation for problem \eqref{eq: aux closed loop} is:
\begin{equation}\label{eq: Bellman}
    \begin{cases}
        & \partial_t \mrw(t,\mu,z)  + \E[\inf_{a\in A} \{ b\big(t,\xi,a,\mu)\partial_\mu \mrw(t,\mu,z)(\xi) - \bar f(t,\mu)\partial_z \mrw(t,\mu,z) \\ & \ \ +\frac{1}{2}  \mathrm{Tr} (\sigma\sigma^\top\big(t,\xi,a,\mu)\partial_x\partial_\mu \mrw(t,\mu,z)(\xi) ) \}] 
        +  \{\Psi(t,\mu)\}_+ = 0 \ \mathrm{for}\ (t,\mu,z)\in[0,T]\times\Pc_2(\R^d)\times \R\\
        & \mrw(T,\mu,z) = \{\widehat{g}(\mu)- z\}_+ + \{\varphi(\mu )\}_+ \ \mathrm{for}\ (\mu,z)\in\Pc_2(\R^d)\times \R.
    \end{cases}
\end{equation} 

By assuming that $\mrw$ is a smooth solution to this Bellman equation, and when   the infimum in 
\begin{equation}
  \inf_{a\in A}  \{ b\big(t,x,a,\mu)\partial_\mu \mrw(t,\mu,z)(x) - \bar f(t,\mu)\partial_z \mrw(t,\mu,z)  +\frac{1}{2}  \mathrm{Tr} (\sigma\sigma^\top\big(t,x,a,\mu)\partial_x\partial_\mu \mrw(t,\mu,z)(x) ) \}  
\end{equation} 
is attained for some measurable function $\hat\mfa(t,x,\mu,z)$ on $[0,T]\times\R^d\times\Pc(\R^d)\times\R$, we get an optimal control for \eqref{eq: auxiliary problem int} given in feedback form by $\alpha_t^*$ $=$ $\hat\mfa(t,X_t^{\alpha^*},\P_{X_t^{\alpha^*}},Z_t^{z,\alpha^*})$, $0\leq t\leq T$,  which shows that one can restrict in \eqref{eq: auxiliary problem int} to deterministic feedback policies.

\section{Extension to the common noise setting} \label{sec:noise}

We briefly discuss how the state constraints can be extended to mean-field control problems with common noise. In this case, in contrast with the previous section, we need to assume the existence of optimal control for the auxiliary unconstrained problem. It is similar to the assumption made by \cite{BPZ16}. Let $W^0$ be a $p$-dimensional Brownian motion independent 
of $W$, and denote by $\F^0 = (\Fc_t^0)_t$ the filtration generated by $W^0$. 
We consider the following cost and dynamics:
\begin{align}\label{eq: original problem common}
 J(\alpha) & = \E\Big[\int_0^T f\big(t,X_t^{\alpha},\alpha_t, \P_{(X_t^{\alpha},\alpha_t)}^{W^0}\big)\ \di t + g\big(X_T^{\alpha}, \P_{X_T}^{W^0}\big) \Big]\\
    \di X_t^{\alpha} & = b\big(t,X_t^{\alpha},\alpha_t,\P_{(X_t^{\alpha},\alpha_t)}^{W^0}\big)\ \di t + \sigma\big(t,X_t^{\alpha},\alpha_t,\P_{(X_t^{\alpha},\alpha_t)}^{W^0}\big)\ \di W_t + \sigma^0\big(t,X_t^{\alpha},\alpha_t,\P_{(X_t^{\alpha},\alpha_t)}^{W^0}\big)\ \di W_t^0,
\end{align}
where $\P_{(X_t^{\alpha},\alpha_t)}^{W^0}$ is the joint conditional law of $(X_t^{\alpha},\alpha_t)$ given $W^0$. The control process $\alpha$ belongs to a set $\Ac$ of $\F$-progressively measurable processes with values in a set $A\subset\R^q$. 

The controlled McKean-Vlasov process $X$ is constrained to verify $\Psi(t,\P_{X_t^{\alpha}}^{W_0})\leq 0$ and $ \varphi(\P_{X_T^{\alpha}}^{W_0}) \leq 0$. 
The proofs still follow the arguments from \cite{BPZ16} but are slightly more involved than in Section \ref{sec: no common noise} due to the additional noise appearing in the conditional law with respect to the common noise. We refer to \cite{PW17, DPT19} for the dynamic programming approach to these problems. The problem of interest is
\begin{align}
    V^0 = \inf_{\alpha\in\Ac} \{J(\alpha)\ |\   \Psi(t,\P_{X_t^{\alpha}}^{W_0})\leq 0 ,\ \forall\ t\in[0,T],\ \varphi(\P_{X_T^{\alpha}}^{W_0}) \leq 0 \  \}.
\end{align}

\subsection{Representation by a stochastic target problem and an associated control problem}

Given $z$ $\in$ $\R$, $\alpha$ $\in$ $\Ac$, and $\beta$ $\in$ $L^2(\F^0,\R^p)$, the set of $\R^p$-valued $\F^0$-adapted processes $\beta$ s.t. $\E[\int_0^T |\beta_t|^2 \di t]$ $<$ $\infty$, define 
\begin{align}\label{eq: definition Z common noise}
    Z^{z,\alpha,\beta}_{t} :=  z - 
    \int_0^t \widehat f(s,\P_{(X_s^{\alpha},\alpha_s)}^{W^0})\ \di s 
    + \int_0^t \beta_s\ \di W_s^0, \quad 0 \leq t \leq T. 
\end{align}   
\begin{lem} \label{lem: target common}
The value function admits the \textbf{stochastic target problem} representation
\begin{align}
     V^0 = \inf \{z\in\R\ |\ & \exists\ (\alpha,\beta)\in\Ac\times L^2(\F^0,\R^p)\ \mathrm{s.t.}\ \widehat{g}(\P_{X_T^{\alpha}}^{W^0})\leq Z_T^{z,\alpha,\beta},\\ & \Psi(t,\P_{X_t^{\alpha}}^{W^0} )\leq 0,\ \forall\ t\in[0,T],\ \varphi(\P_{X_T^{\alpha}}^{W^0} )\leq 0,\ \P\ \mathrm{a.s.}   \}.
\end{align}
\end{lem}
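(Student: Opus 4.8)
The plan is to mimic the proof of Lemma \ref{lem: target} in the common noise setting, the key difference being that the target constraint $\widehat g(\P_{X_T^\alpha}^{W^0}) \leq Z_T^{z,\alpha,\beta}$ is now a $\P$-almost sure inequality between $\F^0$-measurable random variables rather than a deterministic one, and that we have the extra control $\beta$ at our disposal. First I would rewrite the original value function $V^0$ by introducing a threshold variable $z$, exactly as in the proof of Lemma \ref{lem: target}:
\begin{align*}
V^0 = \inf\{ z\in\R\ |\ \exists\ \alpha\in\Ac\ \mathrm{s.t.}\ J(\alpha)\leq z,\ \Psi(t,\P_{X_t^\alpha}^{W^0})\leq 0\ \forall t,\ \varphi(\P_{X_T^\alpha}^{W^0})\leq 0\}.
\end{align*}
The content to verify is that the scalar condition $J(\alpha)\leq z$ is equivalent to the existence of some $\beta\in L^2(\F^0,\R^p)$ realizing the $\P$-a.s.\ target inequality $\widehat g(\P_{X_T^\alpha}^{W^0})\leq Z_T^{z,\alpha,\beta}$.

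The main step is this equivalence. In the conditional-law setting the cost decomposes, after conditioning on $W^0$, as
\begin{align*}
J(\alpha) = \E\Big[\int_0^T \widehat f(t,\P_{(X_t^\alpha,\alpha_t)}^{W^0})\ \di t + \widehat g(\P_{X_T^\alpha}^{W^0})\Big],
\end{align*}
since $\E[\cdot] = \E[\E[\cdot\,|\,W^0]]$ and the inner conditional expectations reproduce $\widehat f$ and $\widehat g$. Define the $\F^0$-adapted random variable $G := \widehat g(\P_{X_T^\alpha}^{W^0}) + \int_0^T \widehat f(t,\P_{(X_t^\alpha,\alpha_t)}^{W^0})\ \di t$, so that $J(\alpha) = \E[G]$. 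For the ``only if'' direction, suppose $J(\alpha)=\E[G]\leq z$. Then $z - \int_0^T \widehat f\,\di t - \widehat g(\P_{X_T^\alpha}^{W^0})$ has nonnegative expectation; I would use a martingale-representation argument for the $\F^0$-Brownian filtration to absorb the fluctuations. Concretely, set $M_t := \E[\,z - G\,|\,\Fc_t^0]$, a square-integrable $\F^0$-martingale with $M_T = z - G \geq$ (something of mean $\geq 0$) and $M_0 = z - \E[G]\geq 0$; by the martingale representation theorem there is $\beta\in L^2(\F^0,\R^p)$ with $M_t = M_0 + \int_0^t \beta_s\,\di W_s^0$. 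Then
\begin{align*}
Z_T^{z,\alpha,\beta} = z - \int_0^T \widehat f(s,\P_{(X_s^\alpha,\alpha_s)}^{W^0})\ \di s + \int_0^T \beta_s\ \di W_s^0 = \widehat g(\P_{X_T^\alpha}^{W^0}) + M_0 \geq \widehat g(\P_{X_T^\alpha}^{W^0}),
\end{align*}
using $M_0\geq 0$, which is exactly the target inequality holding $\P$-a.s. For the ``if'' direction, suppose such a $\beta$ exists with $\widehat g(\P_{X_T^\alpha}^{W^0})\leq Z_T^{z,\alpha,\beta}$ $\P$-a.s.; taking expectations and using $\E[\int_0^T\beta_s\,\di W_s^0]=0$ (the stochastic integral against $W^0$ is a true martingale since $\beta\in L^2$) yields $\E[\widehat g(\P_{X_T^\alpha}^{W^0})]\leq z - \E[\int_0^T\widehat f\,\di s]$, i.e.\ $J(\alpha)\leq z$. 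This closes the equivalence, and since the constraints $\Psi,\varphi$ carry over verbatim, the two representations of $V^0$ coincide.

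The step I expect to be the main obstacle is making the martingale-representation argument clean: one must check that $\beta$ lands in $L^2(\F^0,\R^p)$ (which follows from square-integrability of $G$, itself guaranteed by the linear growth assumptions on $f,g$ and the $L^2$ bound on $X^\alpha$), and that the stochastic integral $\int_0^\cdot \beta_s\,\di W_s^0$ is a genuine martingale so that expectations vanish rather than merely being supermartingale-bounded. One should also confirm that the representation theorem applies to the $\F^0$-filtration alone despite $G$ depending on the joint law — but $G$ is $\F^0$-measurable by construction of the conditional laws, so no subtlety arises there. A minor additional point is that the infimum over $(\alpha,\beta)$ in the target formulation introduces $\beta$ as a free variable, but since $\beta$ enters neither the cost nor the constraints and only serves to realize the target inequality, the added freedom does not lower the value below $V^0$.
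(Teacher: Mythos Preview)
Your proposal is correct and follows essentially the same route as the paper: rewrite $V^0$ with a threshold $z$, then establish the equivalence between $J(\alpha)\leq z$ and the almost-sure target inequality via the martingale representation theorem applied to the $\F^0$-measurable random variable $G=\widehat g(\P_{X_T^\alpha}^{W^0})+\int_0^T\widehat f(s,\P_{(X_s^\alpha,\alpha_s)}^{W^0})\,\di s$, with the converse direction obtained by taking expectations. Your treatment is in fact slightly more careful than the paper's, since you explicitly address the square-integrability of $\beta$ and the true-martingale property of the stochastic integral.
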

Lemma \ref{lem: target common} is proven in Section \ref{sec: proofs common}. 

Define the \textbf{auxiliary unconstrained} control problem
\begin{align}\label{eq: auxiliary problem common}
  \Uc(z) := \inf_{(\alpha,\beta)\in\Ac\times L^2(\F^0,\R^p)} \E \Big[\{\widehat{g}(\P_{X_T^{\alpha}}^{W^0})- Z_T^{z,\alpha,\beta} \}_+ + \int_0^T\ \{\Psi(s,\P_{X_s^{\alpha}}^{W^0})\}_+\ \di s  + \{\varphi(\P_{X_T^{\alpha}}^{W^0} )\}_+  \Big]
\end{align} for $z\in \R$. We notice that $\Uc( z) \geq 0$. 

\begin{Pro}\label{pro: cont y common noise}
$\Uc$ is 1-Lipschitz. For any $z,z'\in  \R$ 
    \begin{equation}
        |\Uc(z) - \Uc(z')|\leq |z-z'|.
\end{equation}
\end{Pro}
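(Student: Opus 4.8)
The plan is to mimic exactly the proof of Proposition \ref{prop: continuity of w}, which established the $1$-Lipschitz property in the no-common-noise setting; the only new ingredient is the common-noise term $\int_0^t \beta_s\, \di W_s^0$ in $Z^{z,\alpha,\beta}$, and the key observation is that this term is independent of $z$. First I would fix $z, z' \in \R$ and use the two elementary inequalities already invoked earlier, namely $|\inf_u A(u) - \inf_u B(u)| \leq \sup_u |A(u) - B(u)|$ and $|\sup_u A(u) - \sup_u B(u)| \leq \sup_u |A(u) - B(u)|$, applied here with the index $u = (\alpha,\beta)$ ranging over $\Ac \times L^2(\F^0,\R^p)$. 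This reduces the difference $|\Uc(z) - \Uc(z')|$ to a supremum over $(\alpha,\beta)$ of the difference of the two bracketed expectation expressions.

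Next I would exploit that, for a fixed pair $(\alpha,\beta)$, the two bracketed quantities differ only through the term $\{\widehat g(\P_{X_T^\alpha}^{W^0}) - Z_T^{z,\alpha,\beta}\}_+$ versus $\{\widehat g(\P_{X_T^\alpha}^{W^0}) - Z_T^{z',\alpha,\beta}\}_+$: the integral penalization $\int_0^T \{\Psi(s,\P_{X_s^\alpha}^{W^0})\}_+\,\di s$ and the terminal penalization $\{\varphi(\P_{X_T^\alpha}^{W^0})\}_+$ do not involve $z$, so they cancel identically. By the $1$-Lipschitz continuity of the positive part $x \mapsto \{x\}_+$, the remaining difference is bounded by $\E\big[|Z_T^{z,\alpha,\beta} - Z_T^{z',\alpha,\beta}|\big]$. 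The crucial simplification is that from the definition \eqref{eq: definition Z common noise} the running-cost integral $\int_0^t \widehat f(s,\cdot)\,\di s$ and the stochastic integral $\int_0^t \beta_s\,\di W_s^0$ are both independent of $z$, hence $Z_T^{z,\alpha,\beta} - Z_T^{z',\alpha,\beta} = z - z'$ deterministically. Therefore $\E\big[|Z_T^{z,\alpha,\beta} - Z_T^{z',\alpha,\beta}|\big] = |z - z'|$, and taking the supremum over $(\alpha,\beta)$ (which the bound does not depend on) yields $|\Uc(z) - \Uc(z')| \leq |z - z'|$, as claimed.

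There is no real obstacle here: the whole proof is essentially the same one-line telescoping argument as in Proposition \ref{prop: continuity of w}, and the common noise is harmless precisely because $\beta$ enters $Z$ additively and independently of $z$. The only point requiring a moment of care is to arrange the ordering of the inequalities so that the supremum over the control pair and the interchange with the expectation are applied correctly — one must keep the expectation inside the supremum throughout, writing the bound as $\Sup_{(\alpha,\beta)} \E\big[ \big| \{\widehat g(\P_{X_T^\alpha}^{W^0}) - Z_T^{z,\alpha,\beta}\}_+ - \{\widehat g(\P_{X_T^\alpha}^{W^0}) - Z_T^{z',\alpha,\beta}\}_+ \big| \big]$ before invoking Lipschitzness of $\{\cdot\}_+$. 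I would therefore present it as a short direct computation rather than as a standalone lemma.
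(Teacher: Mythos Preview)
Your proposal is correct and follows exactly the approach the paper indicates: it states that Proposition~\ref{pro: cont y common noise} ``is proven exactly as'' Proposition~\ref{prop: continuity of w}, and you carry out precisely that argument, correctly noting that the additional stochastic integral $\int_0^T \beta_s\,\di W_s^0$ and the penalization terms are independent of $z$ so that $Z_T^{z,\alpha,\beta}-Z_T^{z',\alpha,\beta}=z-z'$ deterministically.
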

Proposition \eqref{pro: cont y common noise} is proven exactly as \eqref{prop: continuity of w}.

\begin{assumption}\label{assumption: optimal controls common}
Problem \eqref{eq: auxiliary problem common} admits an optimal control for any $z\in  \R$ and the constraint function $(t,\mu)\in[0,T]\times\Pc_2(\R^d) \mapsto \Psi(t,\mu)$ is continuous.

\end{assumption}

\begin{rem}\label{rem: discrete}
Notice that the integral penalization in \eqref{eq: auxiliary problem common} does not allow to consider discrete times constraints (except at terminal time) because the contribution to the integral would be null and the constraint function $\Psi$ would be discontinuous in time. We could consider discrete time constraints in the objective of the auxiliary problem by adding a sum of functions of $\P_{X_{t_i}^{\alpha}}^{W^0}$ for some $(t_i)_{i}\in[0,T]$ but it would lose its standard Bolza form.
\end{rem}

\vspace{1mm}

Define $\Zc = \inf \{z\in\R\ |\ \Uc(z) = 0 \}$. 

\begin{Theorem}\label{th: main common noise}
\begin{enumerate}
    \item 
   If  $\exists\ (\alpha,\beta)\in\Ac\times L^2(\F^0,\R^p),\ \widehat{g}(\P_{X_T^{\alpha}}^{W^0})\leq Z^{z,\alpha,\beta}_{T}$,  $ \Psi(s,\P_{X_s^{\alpha}}^{W^0})\leq 0,\  \forall\ s\in[0,T],$ and $\varphi(\P_{X_T^{\alpha}}^{W^0} )\leq 0,\ \P$ a.s. then $\Uc(z)=0$. Hence $\Zc \leq V^0$.
    \item The value function verifies $ V^0\leq \Zc$ thus $ V^0 = \Zc$. Moreover optimal controls for the problem $\Uc(\Zc) = 0$ are optimal for the original problem.
\end{enumerate}
\end{Theorem}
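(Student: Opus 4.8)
The plan is to mirror the structure of the proof of Theorem \ref{th: main} in the no-common-noise setting, but now working with conditional laws given $W^0$ and with the auxiliary process $Z^{z,\alpha,\beta}$ carrying the extra martingale term $\int_0^t \beta_s\,\di W_s^0$. The two items split naturally into a ``target feasibility implies zero level set'' direction and its converse, and the key new ingredient compared with Section \ref{sec: no common noise} is the use of Assumption \ref{assumption: optimal controls common} (existence of an optimal control) to run the converse argument without passing to $\varepsilon$-relaxations of the constraint.

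For item 1, I would start from the hypothesis that there exists $(\alpha,\beta)\in\Ac\times L^2(\F^0,\R^p)$ with $\widehat g(\P_{X_T^\alpha}^{W^0})\leq Z^{z,\alpha,\beta}_T$, $\Psi(s,\P_{X_s^\alpha}^{W^0})\leq 0$ for all $s$, and $\varphi(\P_{X_T^\alpha}^{W^0})\leq 0$, $\P$-a.s. Under these inequalities each of the three nonnegative terms inside the expectation defining $\Uc(z)$ vanishes $\P$-a.s.: the positive parts $\{\widehat g(\P_{X_T^\alpha}^{W^0})-Z^{z,\alpha,\beta}_T\}_+$, $\{\Psi(s,\P_{X_s^\alpha}^{W^0})\}_+$ and $\{\varphi(\P_{X_T^\alpha}^{W^0})\}_+$ are all zero. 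Hence the integrand is identically $0$, so $\Uc(z)\leq 0$, and combined with $\Uc(z)\geq 0$ we get $\Uc(z)=0$. Applying this with any $z$ admissible for the stochastic target representation of Lemma \ref{lem: target common}, together with the continuity of $\Uc$ from Proposition \ref{pro: cont y common noise}, gives $\Uc(V^0)=0$ by taking a sequence of near-optimal admissible $z$ decreasing to $V^0$; then by definition of $\Zc$ as the infimum of the zero level set we conclude $\Zc\leq V^0$.

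For item 2, this is where Assumption \ref{assumption: optimal controls common} does the work and is the main obstacle. By that assumption $\Uc(\Zc)$ is attained by some optimal $(\alpha^*,\beta^*)$, and since $\Uc(\Zc)=0$ (by continuity of $\Uc$ and the definition of $\Zc$) all three nonnegative terms vanish $\P$-a.s. for this optimizer. This recovers exactly the feasibility conditions $\widehat g(\P_{X_T^{\alpha^*}}^{W^0})\leq Z^{\Zc,\alpha^*,\beta^*}_T$, $\Psi(s,\P_{X_s^{\alpha^*}}^{W^0})\leq 0$ for all $s$, and $\varphi(\P_{X_T^{\alpha^*}}^{W^0})\leq 0$. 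Thus $(\alpha^*,\beta^*)$ is admissible for the target problem at level $z=\Zc$, so $\Zc$ belongs to the feasible set in Lemma \ref{lem: target common}, giving $V^0\leq\Zc$. Combined with item 1 this yields $V^0=\Zc$. The optimality of $\alpha^*$ for the original problem then follows by reading the vanishing of the first term, $\widehat g(\P_{X_T^{\alpha^*}}^{W^0})\leq Z^{\Zc,\alpha^*,\beta^*}_T$, and taking conditional expectations: writing out $Z^{\Zc,\alpha^*,\beta^*}_T$ and using that the $\beta^*$ stochastic integral has zero expectation, one obtains $J(\alpha^*)\leq\Zc=V^0$ while the constraints hold, so $\alpha^*$ is in fact optimal and admissible for $V^0$.

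The delicate point, and the reason the common-noise case genuinely needs the existence assumption rather than the softer $\varepsilon$-penalization route of Theorem \ref{th: main}, is that the feasibility constraints must hold $\P$-a.s. pointwise in $\omega$ (through the conditional laws $\P^{W^0}$), so one cannot simply take an expectation and push limits as in the deterministic-target argument; an $\varepsilon$-optimal control would only control the expectation of the positive-part penalties and not guarantee the $\P$-a.s. constraints. Having a genuine minimizer is what lets us convert ``penalty expectation equals zero'' into ``penalties vanish a.s.,'' which is precisely the bridge back to exact admissibility. I would therefore be careful to justify that $\Uc(\Zc)=0$ is attained (via Assumption \ref{assumption: optimal controls common}) and that a zero expectation of a nonnegative integrand forces the integrand to vanish $\di t\otimes\di\P$-a.e., handling the running term $\int_0^T\{\Psi\}_+\,\di s$ with a Fubini argument to recover the constraint for a.e. $s$ and then for all $s$ by the assumed continuity of $\Psi$.
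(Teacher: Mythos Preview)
Your proposal is correct and follows essentially the same approach as the paper's proof: item~1 shows that feasibility for the target problem forces all three positive-part penalties to vanish, whence $\Uc(z)=0$, and then $\Zc\le V^0$; item~2 uses Assumption~\ref{assumption: optimal controls common} at $z=\Zc$ to extract an optimizer for which the zero expectation of nonnegative terms yields almost-sure vanishing, hence admissibility and $V^0\le\Zc$. Your treatment is in fact slightly more careful than the paper in two places: you obtain $\Uc(V^0)=0$ via the $1$-Lipschitz continuity of $\Uc$ and a sequence of near-optimal levels $z\downarrow V^0$ (the paper's phrase ``with optimal controls $\alpha^*,\beta^*$'' is terse here), and you make explicit the Fubini/continuity step needed to pass from $\int_0^T\{\Psi\}_+\,\di s=0$ a.s.\ to $\Psi(s,\P_{X_s^{\alpha^*}}^{W^0})\le 0$ for \emph{all} $s$, which the paper only records by invoking continuity of $\Psi$ and of $s\mapsto \P_{X_s^{\alpha^*}}^{W^0}$.
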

Theorem \ref{th: main common noise} is proven in Section \ref{sec: proofs common}.

\subsection{Proofs in the common noise framework}\label{sec: proofs common}

\begin{proof}[Proof of Lemma \ref{lem: target common}]
We first observe that 
\begin{align}
    V^0 = \inf \{ z\in\R\ |\ & \exists\ \alpha\in\Ac\ \mathrm{s.t.}\ 
   \E \Big[  \int_0^T  \widehat f(s,\P_{(X_s^{\alpha},\alpha_s)}^{W^0}) \ \di s + \widehat{g}(\P_{X_T^{\alpha}}^{W^0})\Big]\leq z,\\ & \Psi(s,\P_{X_s^{\alpha}}^{W^0})\leq 0,\ \forall\ s\in[0,T],\ \P^0\ \mathrm{a.s.}  \}.
\end{align} 
We need to prove that for $z\in\R$ \begin{equation}\label{eq: target problem common noise}
    \exists\ (\alpha,\beta)\in\Ac\times L^2(\F^0,\R^p)\ \mathrm{s.t.}\ \widehat{g}(\P_{X_T^{\alpha}}^{W^0})\leq Z^{z,\alpha,\beta}_{T},\ \Psi(s,\P_{X_s^{\alpha}}^{W^0})\leq 0,\ \forall\ s\in[0,T],\ \varphi(\P_{X_T^{\alpha}}^{W^0} )\leq 0,\ \P^0\ \mathrm{a.s.},
\end{equation}  and 
\begin{equation}\label{eq: target common noise}
    \exists\ \alpha\in\Ac\ \mathrm{s.t.}\ \E \Big[\int_0^T 
    \widehat{f}(s,\P_{(X_s^{\alpha},\alpha_s)}^{W^0}) \ \di s + \widehat{g}(\P_{X_T^{\alpha}}^{W^0})\Big]\leq z,\ \Psi(s,\P_{X_s^{\alpha}}^{W^0})\leq 0,\ \forall\ s\in[0,T],\ \varphi(\P_{X_T^{\alpha}}^{W^0} )\leq 0,\ \P^0\ \mathrm{a.s.},
\end{equation}  are equivalent. It is immediate to see that
\eqref{eq: target problem common noise} $\implies$ \eqref{eq: target common noise} by taking the expectation and noticing that the Itô integral is a true martingale. Conversely, assuming \eqref{eq: target common noise}, the martingale representation theorem provides a process $\widehat{\beta}$ such that 
\begin{align}
 z & \geq  \E\Big[\int_0^T \widehat{f}(s,\P_{(X_s^{\alpha},\alpha_s)}^{W^0})\ \di s + \widehat{g}(\P_{X_T^{\alpha}}^{W^0})\Big]\\ & =   
 \int_0^T \widehat{f}(s,\P_{(X_s^{\alpha},\alpha_s)}^{W^0}) \ \di s + \widehat{g}(\P_{X_T^{\alpha}}^{W^0}) 
 - \int_0^T \widehat{\beta_s}\ \di W_s^0.
\end{align} Thus by \eqref{eq: definition Z common noise}
\begin{equation}
    Z^{z,\alpha,\widehat{\beta}}_T \geq \widehat{g}(\P_{X_T^{\alpha}}^{W^0}),\ \P^0\ \mathrm{a.s.},
\end{equation} and we see that \eqref{eq: target common noise}
 $\implies$ \eqref{eq: target problem common noise}. Then the result follows.
\end{proof}

\begin{proof}[Proof of Theorem \ref{th: main common noise}]

1) $\exists\ (\alpha,\beta)\in\Ac\times L^2(\F^0,\R^p),\ \widehat{g}(\P_{X_T^{\alpha}}^{W^0})\leq Z^{z,\alpha,\beta}_{T}$, 
$ \Psi(s,\P_{X_s^{\alpha}}^{W^0})\leq 0,\  \forall\ s\in[0,T]$ and $\varphi(\P_{X_T^{\alpha}}^{W^0} )\leq 0,\ \P^0$ a.s.. Therefore
\begin{equation}
    \{\widehat{g}(\P_{X_T^{\alpha}}^{W^0})- Z^{z,\alpha,\beta}_{T}\}_+ + \int_0^T\ \{\Psi(s,\P_{X_s^{\alpha}}^{W^0})\}_+\ \di s + \{\varphi(\P_{X_T^{\alpha}}^{W^0} )\}_+= 0,\ \P^0\ \mathrm{a.s.} 
\end{equation} and by non-negativity of $\Uc$  we obtain $\Uc(z) = 0$. 
Then with optimal controls $\alpha^*, \beta^*$ we obtain $\Uc(V^0) = 0 $. By definition of  $\Zc$ the property is established.\newline

2) By 1) and the continuity given by Proposition \ref{pro: cont y common noise}, we obtain $\Uc(\Zc) = 0 $.  Then by Assumption \ref{assumption: optimal controls common} $\exists\ (\alpha,\beta)\in\Ac\times L^2(\F^0,\R^p)$ such that
\begin{equation}
   \E^0\Big[ \{\widehat{g}(\P_{X_T^{\alpha}}^{W^0})- Z^{\Zc,\alpha,\beta}_{T}\}_+ + \int_0^T\ \{\Psi(s,\P_{X_s^{\alpha}}^{W^0})\}_+\ \di s + \{\varphi(\P_{X_T^{\alpha}}^{W^0} )\}_+ \Big] = 0.
\end{equation} The three terms on the l.h.s. being non-negative, they are in fact null $\P$ a.s. Thus \begin{equation}
    (\P_{X_T^{\alpha}}^{W^0}, Z^{\Zc,\alpha,\beta}_{T})\in\mathrm{Epi}(\widehat{g}),\   \Psi(s,\P_{X_s^{\alpha}}^{W^0})\leq 0\ \forall\ s\in[0,T],\ \mathrm{and}\, \varphi(\P_{X_T^{\alpha}}^{W^0} )\leq 0\ \  \,\ \P\ \mathrm{a.s.} 
\end{equation} 
by continuity of $\Psi$ and of $s\mapsto \P_{X_s^{\alpha}}^{W^0}$, which means $V^0 \leq \Zc$. By 1) it yields $V^0 = \Zc$. As a consequence the previous proof provides an optimal control $\alpha$ for the original problem. 
\end{proof}

\section{Applications and numerical tests} \label{sec:numerical} 

We design several machine learning methods to solve this problem. We discretize the problem in time, parametrize the control by a neural network and directly minimize the cost. When the constraints are almost sure, we can sometimes enforce them by choosing an appropriate neural network architecture, for instance in storage problems. A more adaptive alternative is to solve the unconstrained auxiliary problem. We propose an extension of the first algorithm from \cite{CL19} to achieve this task. Thus we obtain a machine learning method able to solve state constrained mean field control problems. 

\subsection{Algorithms}

We solve the auxiliary problem in the simpler case without common noise with a first algorithm. We fix a relevant line segment $K$ of $\R$   on which we are going to explore the potential values of the problem. For instance we know that the value is greater than the value of the unconstrained problem $V$ therefore it is useless to compute the auxiliary value function for $z\leq V$. We discretize the problem in time on the grid $t_k := \frac{kT}{N}$. We call $\Delta t := \frac{kT}{N}$ and the Brownian increment $ \Delta W_i := W_{t_{i+1}} - W_{t_{i}}$. For $j=1,\cdots,N$,  $ \Delta W_i^j$ (respectively $X_0^j$) correspond to samples from $N$ independent Brownian motions $W^j$ (respectively from $N$ independent random variables with law $\mu_0$).
For training we discretize $K$ by using $N$ points.
We choose $\varepsilon$ as a small parameter, typically smaller than $10^{-8}$. In our tests we took $\varepsilon = 10^{-8}$ but we notice that with the level of discretization we chose for $K$, with 25 points, the obtained values for $w$ decrease to around $10^{-5}$ and $10^{-6}$ before reaching exactly zero so any value of $\varepsilon < 10^{-8}$ yields the same result on our examples.  We refer to \cite{boka} for results on the numerical approximation of level sets with a given threshold in the context of constrained deterministic optimal control. We propose the following extension of the Method 1 from \cite{CL19}. It is tested in Subsection \ref{sec: mean variance}. It can indeed also be used to solve unconstrained problem.

\begin{rem}\label{rem: lambda}

We point out that adding an additional parameter $\Lambda>0$ in front of the constraint function does not modify the representation results. In that case we solve the following auxiliary problem  
\begin{align} \label{eq:algo1} 
& \;  \Yc^\Psi_{\Lambda}  := z\in\R \nonumber \\
 \mapsto & \;  \inf_{\alpha\in\Ac} \Big[\{\widehat{g}(\P_{X_T^{\alpha}})- Z^{z,\alpha}_{T}\}_+ + \Lambda  \int_0^T \{\Psi(s,\P_{X_s^{\alpha}})\}_+\ \di s + \Lambda  \{ \varphi( \P_{X_T^{\alpha}}) \}_+ \Big].  
\end{align}
\end{rem}

We discretize the problem in time and use a neural network by time step, since a single network taking time as input is usually not sufficient enough for complex problems, as shown in \cite{W21}. 
In view of the discussion about closed-loop controls in Section \ref{sec: integral}, 
the neural network representing the control  at each time step takes as inputs the current states  
$X$ and $Z^{z,\alpha}_{i}$ where $z$ is taken on a discretization of $K$. The method is described in Algorithm \ref{algo} with an example in Section \ref{sec: mean variance}. Solving \eqref{eq: Bellman} with the approach of \cite{GLPW21} would provide another numerical method for mean-field control with state constraints. 
The extension to the common noise case, where we aim to solve the  auxiliary problem  
\begin{align} \label{eq:algo2} 
& \;  \Yc^\Psi_{\Lambda}  := z\in\R \nonumber \\
 \mapsto & \;  \inf_{(\alpha,\beta)\in\Ac\times L^2(\F^0,\R^p)} \Big[\{\widehat{g}(\P^{W^0}_{X_T^{\alpha}})- Z^{z,\alpha}_{T}\}_+ + \Lambda  \int_0^T \{\Psi(s,\P^{W^0}_{X_s^{\alpha}})\}_+\ \di s + \Lambda  \{ \varphi( \P^{W^0}_{X_T^{\alpha}}) \}_+ \Big].  
\end{align} is given in Algorithm \ref{algo common} where the neural network for the control at each time step $t_i$ takes in addition as input the current value of the common noise $W_{t_i}^0$. Notice that in general, the control may depend on  the past values of the common noise, 
which could be taken into account in the neural network by taking as inputs the 
past increments of the common noise $\Delta W^0_0,\ldots,\Delta W^0_{i-1}$, where $\Delta W_i^0$ $=$ $W_{t_{i+1}}^0 -W_{t_i}^0$. 
The neural network for the auxiliary control $\beta$ at each time $t_i$ 
takes as inputs the current state $Z^{z,\alpha}_{i}$ and the current value of the common noise. 
An illustration is given in Section \ref{sec: storage}.
\\

\begin{algorithm2e}[H]
\DontPrintSemicolon 
\SetAlgoLined 

{\bf Input parameters:}  $\Lambda$, $M$, $N$, $N_T$, $\varepsilon$. 
{For a discretization $z_1<\cdots< z_M$ of $K$, minimize over neural networks $(\alpha_i)_{i\in 0,\cdots,N_T-1} $: $\R^d\times\R$ $\rightarrow$ $\R^q$ the loss function
\begin{align}
\sum_{m=1}^M w_{\Lambda}(z_m)
\end{align}
with $w_{\Lambda}$ defined by
\begin{align}
w_{\Lambda}(z)  & := \; \E \Big[\{\frac{1}{N}\sum_{l=1}^N g\Big(X_{N_T}^l,\frac{1}{N}\sum_{j=1}^N \delta_{X_{N_T}^j}\Big)- Z^{z,\alpha}_{N_T}\}_+ + \Lambda  \sum_{i=1}^{N_T} \{\Psi\Big(t_i,\frac{1}{N}\sum_{j=1}^N \delta_{X_{i}^j}\Big)\}_+\ \Delta t \\ & \ \ \ \ \ \ \ \ + \Lambda \{\varphi \Big(\frac{1}{N}\sum_{j=1}^N \delta_{X_{N_T}^j}\Big) \}_+ \Big].  
\end{align}\tcc*{ Auxiliary problem}
and for $i=0,\cdots,N_T-1$, $j=1,\cdots,N$
\begin{align}
    X_{i+1}^j & = X_{i}^j + b\big(t_i, X_{i}^j,\alpha_i(X_{i}^j,Z^{z,\alpha}_{i}),\overline{\mu}_i\big)\Delta t + \sigma\big(t_i, X_{i}^j,\alpha_i(X_{i}^j,Z^{z,\alpha}_{i}),\overline{\mu}_i\big)\Delta W_i^j, \quad X_{0}^j \sim  \mu_0 \\
    Z^{z,\alpha}_{i+1} &  =  Z^{z,\alpha}_{i} - \frac{1}{N} \sum_{l=1}^N f\big(t_i, X_{i}^l,\alpha_i (X_{i}^l,Z^{z,\alpha}_{i}), \overline{\mu}_i\big)\ \Delta t, \quad Z^{z,\alpha}_{0}  = z \\
    \overline{\mu}_i & = \frac{1}{N}\sum_{j=1}^N \delta_{(X_{i}^j,\alpha_i(X_{i}^j,Z^{z,\alpha}_{i}))}\\
\end{align}\tcc*{ Particle approximations}
Define $\alpha^*$ as the solution to this minimization problem.\\
Then, compute $V_0 = \inf \{z_i,\ i \in \llbracket 1, M \rrbracket \ |\ w_{\Lambda}(z_i) \leq \varepsilon \} $ with $\alpha = \alpha^*$ in the dynamics.\\\tcc*{ Recovering the cost of the original problem}
Return the value $V_0$ and the optimal controls $\hat{\alpha}_i : x \mapsto \alpha_i^*(x,Z^{V_0,\alpha^*}_i)$ for $i=0,\cdots,N_T-1$. 
\\ \tcc*{ Recovering the control of the original problem}
}
\caption{Algorithm to solve mean-field control problem \eqref{eq:algo1} \label{algo} }
\end{algorithm2e}

\begin{algorithm2e} 
\DontPrintSemicolon 
\SetAlgoLined 
{\bf Input parameters:}  $\Lambda$, $M$, $N$,  $N_T$, $\varepsilon$. 
{For a discretization $z_1<\cdots< z_M$ of $K$, minimize over neural networks $(\alpha_i)_{i\in 0,\cdots,N_T-1} $: $\R^d\times\R\times\R^p$ $\rightarrow$ $\R^q$ and $(\beta_i)_{i\in 0,\cdots,N_T-1} $: $\R\times\R^p$ $\rightarrow$ $\R^p$ the loss function 
\begin{align}
\sum_{m=1}^M w_{\Lambda}(z_m)
\end{align}
with $w_{\Lambda}$ defined by
\begin{align} 
w_{\Lambda}(z)  & := \; \E \Big[\{\frac{1}{N}\sum_{l=1}^N g\Big(X_{N_T}^l,\frac{1}{N}\sum_{j=1}^N \delta_{X_{N_T}^j}\Big)- Z^{z,\alpha,\beta}_{N_T}\}_+ + \Lambda \sum_{i =1}^{ N_T} \{\Psi\Big(t_i,\frac{1}{N}\sum_{j=1}^N \delta_{X_{i}^j}\Big)\}_+ \ \Delta t \\ & \ \ \ \ \ \ \ \ + \Lambda \{\varphi \Big(\frac{1}{N}\sum_{j=1}^N \delta_{X_{N_T}^j}\Big) \}_+ \Big]. 
\end{align}\tcc*{ Auxiliary problem}
and for $i=0,\cdots,N_T-1$, $j=1,\cdots,N$
\begin{align}
    X_{i+1}^j & = X_{i}^j + b\big(t_i, X_{i}^j,\alpha_i(X_{i}^j,Z^{z,\alpha,\beta}_{i},W_{t_{i}}^0),\overline{\mu}_i\big)\Delta t  + \sigma\big(t_i, X_{i}^j,\alpha_i(X_{i}^j,Z^{z,\alpha,\beta}_{i},W_{t_{i}}^0),\overline{\mu}_i\big)\Delta W_i^j\\ & + \sigma_0\big(t_i, X_{i}^j,\alpha_i(X_{i}^j,Z^{z,\alpha,\beta}_{i},W_{t_{i}}^0),\overline{\mu}_i\big)\Delta W_i^0, 
    \quad X_{0}^j   \sim  \mu_0 \\
    Z^{z,\alpha,\beta}_{i+1} &  = Z^{z,\alpha,\beta}_{i} - \frac{1}{N}  \sum_{l=1}^N f\big(t_i, X_{i}^l,\alpha_i (X_{i}^l,Z^{z,\alpha,\beta}_{i},W_{t_{i}}^0), \overline{\mu}_i\big)\ \Delta t  +  \beta_i(Z^{z,\alpha,\beta}_{i},W_{t_{i}}^0)\ \Delta W_{i}^0, \quad Z^{z,\alpha,\beta}_{0} & = z \\
    \overline{\mu}_i & = \frac{1}{N}\sum_{j=1}^N \delta_{(X_{i}^j,\alpha_i(X_{i}^j,Z^{z,\alpha,\beta}_{i},W_{t_{i}}^0))}\\
\end{align}\tcc*{ Particle approximations}
Define $(\alpha^*,\beta^*)$ as the solution to this minimization problem.\\
Then, compute $V_0 = \inf \{z_i,\ i \in \llbracket 1, M \rrbracket \ |\ w_{\Lambda}(z_i) \leq \varepsilon \} $ with $\alpha = \alpha^*$ and $\beta = \beta^*$ in the dynamics.\\\tcc*{ Recovering the cost of the original problem}
Return the value $V_0$ and the optimal controls $\hat{\alpha}_i : x \mapsto \alpha_i^*(x,Z^{V_0,\alpha^*,\beta^*}_i,W_{t_{i}}^0)$ for $i=0,\cdots,N_T-1$. 
\\ \tcc*{ Recovering the control of the original problem}
}
\caption{Algorithm to solve mean-field control problem \eqref{eq:algo2} \label{algo common} }
\end{algorithm2e}

\subsection{Mean-variance problem with state constraints}\label{sec: mean variance}

We consider the celebrated Markowitz portfolio selection problem where an investor can invest at any time $t$ an amount $\alpha_t$ in a risky asset (assumed for simplicity to follow a Black-Scholes model with constant rate of return $r$ and volatility $\sigma$ $>$ $0$), hence generating a wealth process $X$ $=$ $X^\alpha$ with dynamics
\begin{align}
\di X_t &= \; \alpha_t r\ \di t + \alpha_t \sigma\ \di W_t, \quad 0 \leq t \leq T, \; X_0 = x_0 \in \R.  
\end{align}
The goal is then to minimize over portfolio control $\alpha$ the mean-variance criterion : 
\begin{align}\label{eq: dual mean-variance}
\inf_\alpha\ J(\alpha) =\ & \lambda {\rm Var}(X_T^\alpha) - \E[X_T^\alpha]
\end{align} where $\lambda$ $>$ $0$ is a parameter related to the risk aversion of the investor.
We will add to this standard problem a conditional expectation constraint in the form 
\begin{align}
    & E[X_t^\alpha\ |\ X_t^\alpha\leq \theta] \geq \delta,\ \mathrm{if}\ \P(X_t^\alpha\leq \theta)> 0, 
\end{align} 
with $\delta < \theta$, which can be reformulated as
\begin{align}
    &  0\geq (\delta - E[X_t^\alpha\ |\ X_t^\alpha\leq \theta]) \P(X_t^\alpha\leq \theta).
\end{align} 
The auxiliary deterministic unconstrained control problem is therefore 
\begin{align}\label{eq: auxiliary problem mean variance}
   \Yc_{\Lambda}(z) := 
  \inf_{\alpha\in\Ac} \Big[\{\lambda {\rm Var}(X_T^{\alpha}) - \E[X_T^{\alpha}]- z\}_+ + \Lambda \int_0^T \{(\delta-E[X_s^{\alpha}\ |\ X_s^{\alpha}\leq \theta])\P (X_s^{\alpha}\leq\theta)\}_+\ \di s \Big]\\
\end{align} with the dynamics $\di X_s^{\alpha}= \alpha_s r\ \di s + \alpha_s \sigma\ \di W_s$, 
which corresponds to the constraint function $\Psi(t,\mu)\mapsto (\delta-E_\mu[\xi\ |\ \xi\leq \theta])\mu((-\infty,\theta]) $.  
We have the representation
$J(\alpha^*) = \Zc = \inf \{ z\in\R\ |\ \Yc_{\Lambda}(z) = 0 \}$. Indeed we see that the null control is admissible with the modified constraint $E[X_t^\alpha\ |\ X_t^\alpha\leq \theta]\P(X_t^\alpha\leq \theta)=0\geq (\delta+\varepsilon)\P(X_t^\alpha\leq \theta)=0,\ \forall t\in[0,T]$ for any $0<\varepsilon< \theta - \delta$ because $x_0\geq\theta$ hence $\P(X_t^\alpha\leq \theta) = 0$ so we can apply Theorem \ref{th: representation}.  
For practical application, other constraints could be considered like almost sure constraints on the portfolio weights as in \cite{W21b}. Instead of the dualization method used by \cite{LLP20}, constraints on the law of the tracking error with respect to a reference portfolio could be enforced.
\paragraph{}
For numerical tests we take $r=0.15,\ \sigma=0.35,\ \lambda=1$. We choose $x_0=1,\theta = 0.9,\ \delta =0.8$ and solve 
\begin{align}\label{eq: constraint mean variance}
\inf_\alpha\ & J(\alpha) = \lambda {\rm Var}(X_T^\alpha) - \E[X_T^\alpha]\\
    & \di X_t = \; \alpha_t r\ \di t + \alpha_t \sigma\ \di W_t,\\
    &  (0.8- E[X_t^\alpha\ |\ X_t^\alpha\leq 0.9] )\P(X_t^\alpha\leq 0.9) \leq 0,\ \forall t\in[0,T].
\end{align}We compare the controls from Algorithm \ref{algo} with the exact optimal ones in the unconstrained case for which we have an analytical value. We also solve without constraints for comparison and plot the final time histograms. We solve the unconstrained case with algorithm \ref{algo} and the one from \cite{CL19} for comparison. We take $50$ time steps for the time discretization and a batch size of 20000. We use an feedforward architecture with two hidden layers of 15 neurons. We perform 15000 gradient descent iterations with Adam optimizer (see \cite{KB14}) thanks to the Tensorflow library. The true value $v =J(\alpha^*)$ is -1.05041 without constraints. We also have the upper bound $-1.$ for the value in the constrained case, corresponding to the identically null control and wealth process $X_t = 1\ \forall t\in[0,T]$.  
With constraint we choose $K = [-1.047,-1.041]$, 
without constraint we take $K = [-1.07,-1.03]$, discretized by regular grids with 25 points.

\begin{figure}[H]
   \begin{minipage}[c]{.49\linewidth}
          \includegraphics[width=0.95\linewidth]{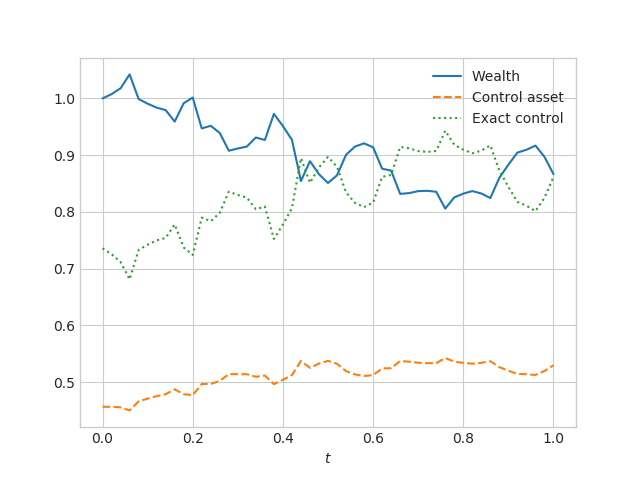}
          \caption*{Problem \eqref{eq: constraint mean variance}}
   \end{minipage} \hfill
   \begin{minipage}[c]{.49\linewidth}
      \includegraphics[width=0.95\linewidth]{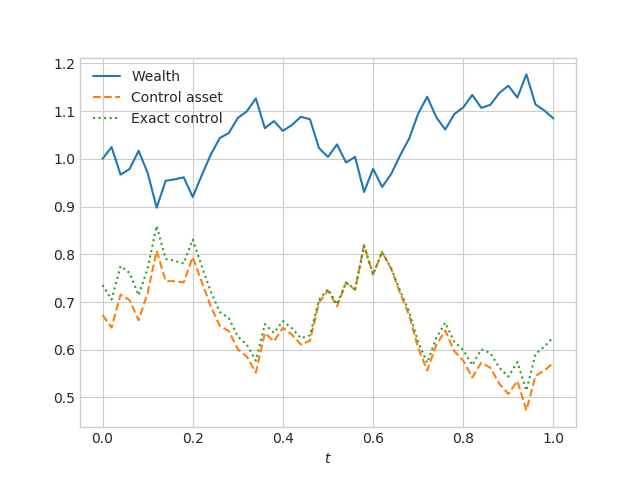}
      \caption*{No constraint, problem \eqref{eq: dual mean-variance}}
   \end{minipage}
   \caption{Sample path of the controlled process $X_t^\alpha$, with the analytical optimal control (for the unconstrained case) and the computed control. On the left figure we don't have the true control but plot the unconstrained one for comparison. Here $\Lambda = 100$} \label{fig: traject mean variance}
\end{figure}

\begin{figure}[H]
   \begin{minipage}[c]{.49\linewidth}
          \includegraphics[width=0.95\linewidth]{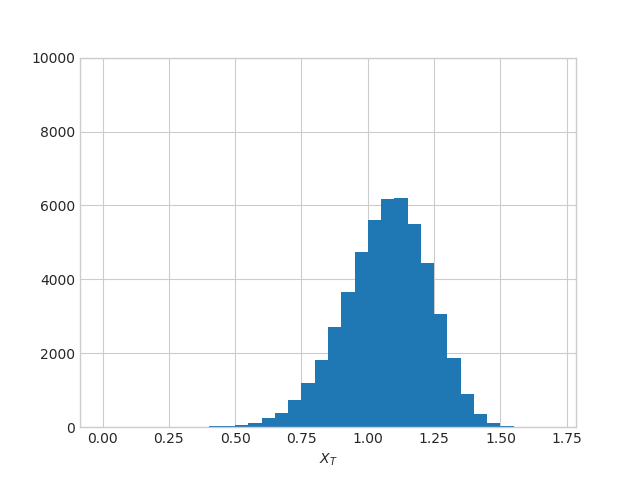}
          \caption*{Problem \eqref{eq: constraint mean variance}}
   \end{minipage} \hfill
   \begin{minipage}[c]{.49\linewidth}
      \includegraphics[width=0.95\linewidth]{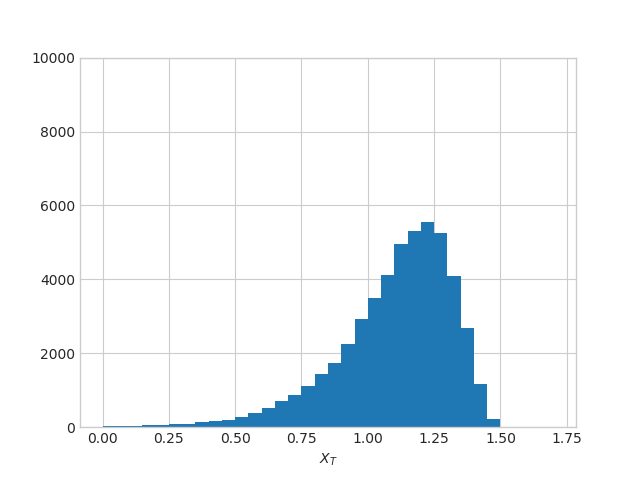}
      \caption*{No constraint, problem \eqref{eq: dual mean-variance} }
   \end{minipage}
   \caption{Histogram of $X_T^\alpha$ for 50000 samples. Here $\Lambda = 100$.} \label{fig: hist mean variance}
\end{figure}

\begin{figure}[H]
   \begin{minipage}[c]{.49\linewidth}
          \includegraphics[width=0.95\linewidth]{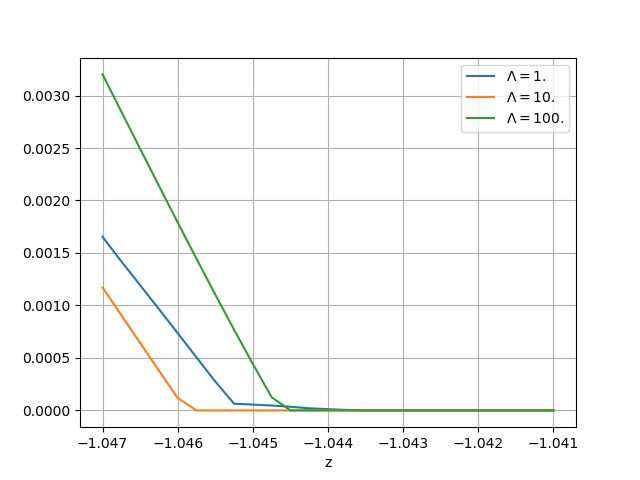}
          \caption*{Problem \eqref{eq: constraint mean variance}}
   \end{minipage} \hfill
   \begin{minipage}[c]{.49\linewidth}
      \includegraphics[width=0.95\linewidth]{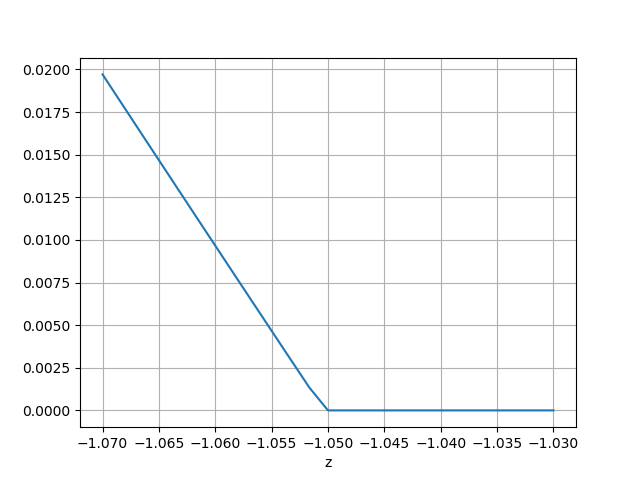}
      \caption*{No constraint, problem \eqref{eq: dual mean-variance} }
   \end{minipage}
   \caption{Auxiliary value function $\Yc_{\Lambda}(z)$ for several values of $\Lambda$ in the constrained case, auxiliary value function $\Yc(z)$ in the unconstrained case} \label{fig: w values}
\end{figure} 

\begin{figure}[H]
   \begin{minipage}[c]{.49\linewidth}
          \includegraphics[width=0.95\linewidth]{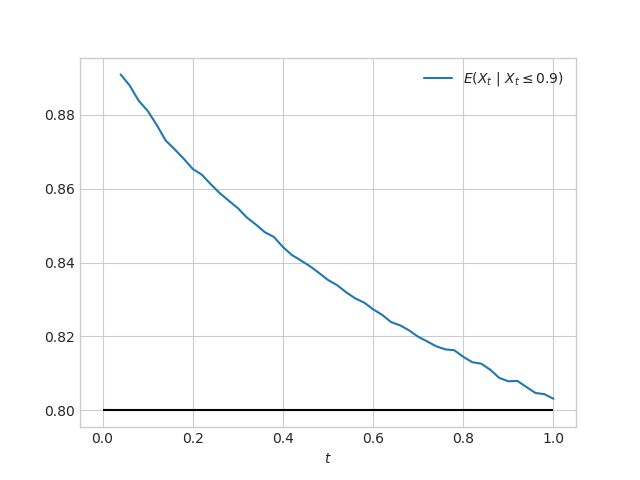}
          \caption*{Problem \eqref{eq: constraint mean variance}}
   \end{minipage} \hfill
   \begin{minipage}[c]{.49\linewidth}
      \includegraphics[width=0.95\linewidth]{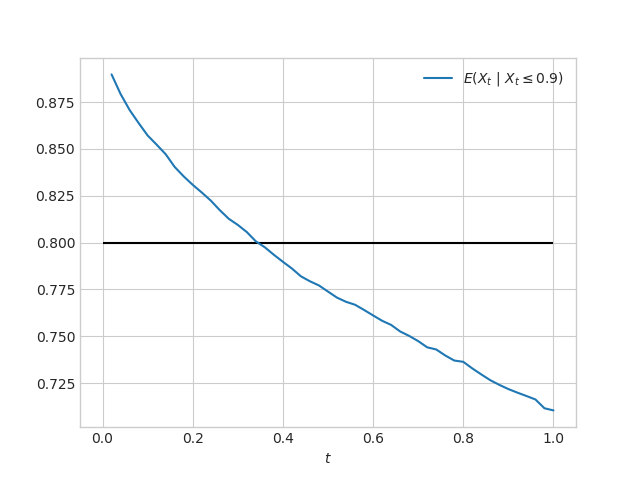}
      \caption*{No constraint, problem \eqref{eq: dual mean-variance} }
   \end{minipage}
   \caption{Conditional expectation $E[X_t^\alpha\ |\ X_t^\alpha\leq 0.9]$ estimated with 50000 samples. The black line corresponds to $\delta = 0.8$. Here $\Lambda = 100$} \label{fig: proba}
\end{figure}

In Figure \ref{fig: hist mean variance} we observe the shift of the distribution of the final wealth thanks to the constraint (on the left) with less probable large losses but also less probable large gains. Indeed Figure \ref{fig: proba} confirms that the conditional expectation constraint is verified when we solve the corresponding problem through our level set approach. We see in Figure \ref{fig: w values} that the more $\Lambda$ is large the more the auxiliary value function becomes affine before reaching zero. Additional results are presented in Table \ref{tab: table results mean variance}. 

Our method can also handle directly the primal of the mean-variance problem, that is to maximize over portfolio control $\alpha$ the expected terminal wealth under a terminal variance constraint: 
\begin{align}\label{eq: primal mean-variance}
\inf_\alpha\ & \bar J(\alpha) =   - \E[X_T^\alpha]\\
    & \di X_t = \; \alpha_t r\ \di t + \alpha_t \sigma\ \di W_t,\\
    & {\rm Var}(X_T^\alpha ) \leq \vartheta.
\end{align} which give the same optimal control as Problem \eqref{eq: dual mean-variance} under the correspondence $\lambda =  \sqrt{\frac{\exp(\sigma^{-2}r^2 T)-1}{4\vartheta}}$. This problem allows us to consider a constrained problem with an analytical solution.
In this case ${\rm Var}(X_T^{\alpha^*} )= \vartheta$ thus 
$J(\alpha^*) = \lambda {\rm Var}(X_T^{\alpha^*} ) + \bar J(\alpha^*)= \lambda \vartheta + \bar J(\alpha^*)$. For comparison with Problem \eqref{eq: dual mean-variance}  we thus report $\lambda \vartheta + \bar J(\alpha^*)$ for Problem \eqref{eq: primal mean-variance} in Table \ref{tab: table results mean variance} and choose $\vartheta = \frac{\exp(\sigma^{-2}r^2 T)-1}{4\lambda^2} = 0.0504$. In this case the auxiliary deterministic unconstrained control problem is now
\begin{align}\label{eq: auxiliary problem mean variance primal}
   \Uc_{\Lambda}(z) = \inf_{\alpha\in\Ac} &\Big[\{ - \E[X_T^{\alpha}]- z\}_+ + \Lambda \{ \mathrm{Var}(X_T^{\alpha})-\vartheta\}_+ \Big]    \\
  & \di X_t = \; \alpha_t r\ \di t + \alpha_t \sigma\ \di W_t,
\end{align} which corresponds to the constraint function $\Psi(t,\mu)\mapsto ( \mathrm{Var}(\mu)-\vartheta)_+\mathds{1}_{t=T}$ and the modified constraint function $\overline{\Psi}_\eta(t,\mu)\mapsto ( \mathrm{Var}(\mu)-\vartheta)_+\mathds{1}_{t=T}-\eta \mathds{1}_{t<T}$ (see Remark \ref{rem: finite rhs}). 
Theorem \ref{th: representation} still applies as far as the null control is admissible with the modified constraint $(\mathrm{Var}(\mu)-\vartheta)_+\mathds{1}_{t=T} +\varepsilon - \eta \mathds{1}_{t<T}\leq 0$ for any $0 < \varepsilon < \eta$ and any $t\in[0,T]$.  

\begin{figure}[H]
   \begin{minipage}[c]{.49\linewidth}
          \includegraphics[width=0.95\linewidth]{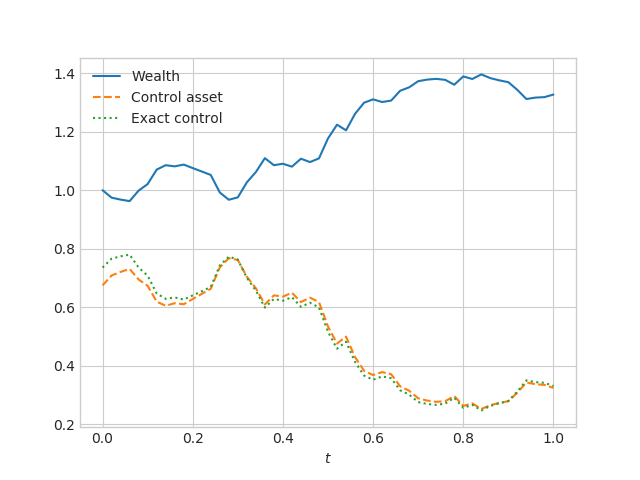}
   \end{minipage} \hfill
   \begin{minipage}[c]{.49\linewidth}
      \includegraphics[width=0.95\linewidth]{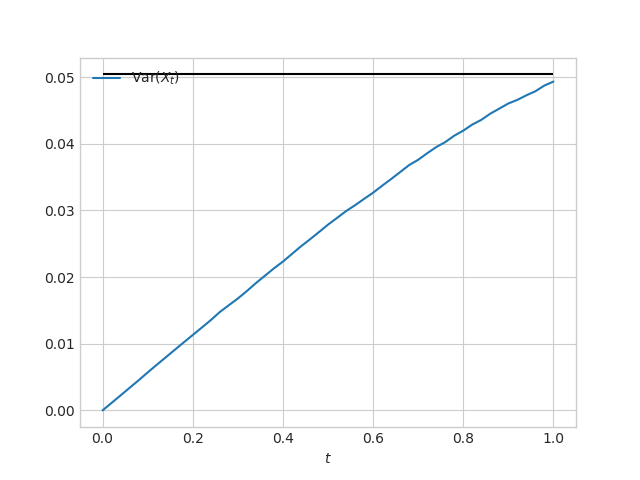}
   \end{minipage}
   \caption{Sample trajectory of the controlled process $X^\alpha_t$ and the control for problem \eqref{eq: primal mean-variance} (left). Variance $\mathrm{Var}(X_t)$ estimated with 50000 samples for problem \eqref{eq: primal mean-variance} (right) with $\Lambda = 10$} \label{fig: traject mean variance primal}
\end{figure}

\begin{figure}[H]
    \centering
    \includegraphics[width=8cm]{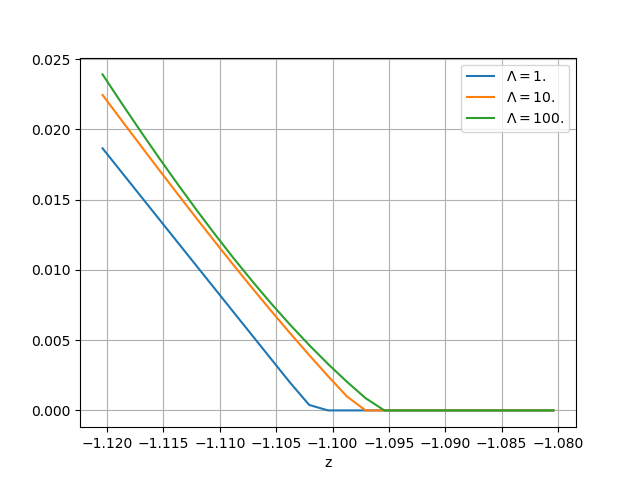}
    \caption{Auxiliary value function $\Uc_{\Lambda}(z)$ for several values of $\Lambda$}
    \label{fig: w primal}
\end{figure}

Figure \ref{fig: traject mean variance primal}  
shows that we recover the optimal control for the problem and that the terminal variance constraint is satisfied. We see in Figure \ref{fig: w primal} that similarly as in Figure \ref{fig: w values}, for large values of $\Lambda$ the auxiliary value function is affine before reaching zero. In this case the exact solution is $-1.10$ which is very close to the point in which the affine part reaches zero.



\begin{scriptsize}
\begin{table}[htp]
	\centering
	\begin{tabular}{|c|c|c|c|c|c|c|c|c|c|}
		\hline
		Problem & Average & Std & Tr. val. & Error & $\E[X_T^{\alpha^*}]$ & Tr. $\E[X_T^{\alpha^*}]$ & $\mathrm{Var}(X_T^{\alpha^*})$ & Tr. $\mathrm{Var}(X_T^{\alpha^*})$\\	
		 \hline
		 \eqref{eq: constraint mean variance} $\Lambda = 1.$  &   -1.044   & 0.0010 & {\rm Not avail.} & {\rm Not avail.} & 1.07 & {\rm Not avail.} & 0.026  & {\rm Not avail.} \\
		\hline
	  \eqref{eq: constraint mean variance} $\Lambda = 10.$  & -1.044   & 0.0005 & {\rm Not avail.} & {\rm Not avail.} & 1.07 & {\rm Not avail.} &  0.026 & {\rm Not avail.} \\
		\hline
		 

	 \ref{eq: constraint mean variance}	$\Lambda = 100.$ & -1.045
 & 0.0005 & {\rm Not avail.}  & {\rm Not avail.}  & 1.07  & {\rm Not avail.}  & 0.027 & {\rm Not avail.}  \\
	\hline
	

	 \eqref{eq: primal mean-variance} $\Lambda = 10.$ & -1.048
 & 0.0017 & -1.050 & 0.22 & 1.10 & 1.10 & 0.049 & 0.050\\
	\hline
		\eqref{eq: dual mean-variance}  & -1.050
   & 0.0009  & -1.050 & 0.07 &  1.10 & 1.10 & 0.050 & 0.050\\
		\hline
		\eqref{eq: dual mean-variance} \cite{CL19} & -1.052   & 0.0022 & -1.050 & 0.13  & 1.10 & 1.10 & 0.053  & 0.050\\
		\hline
	\end{tabular}
	\caption{Estimate of the solution 
	with maturity $T = 1.$ 
	Average and standard deviation observed over 10 independent runs are reported, with the relative error (in $\%$). We also report the terminal expectation and variance of the approximated optimally controlled process for a single run. '{\rm Not avail.} ' means that we don't have a reference value and 'Tr.' means true. For problem \eqref{eq: primal mean-variance},  we take $\Lambda = 10$ and for problem \eqref{eq: constraint mean variance} we illustrate the values obtained for $\Lambda \in \{1., 10., 100\}$. }
	\label{tab: table results mean variance}
\end{table}
\end{scriptsize}

In Table \ref{tab: table results mean variance} we observe that our method gives a small variance for the results over several runs. In the case where an analytical solution is known, the value of the control problem is computed accurately with less than $0.5\%$ of relative error. The expectation and variance of the terminal value of the optimally controlled process are also very close to their theoretical values. In the case of a conditional expectation constraint, even though we don't have an exact solution we notice that the value is close to the unconstrained value hence since our solution is admissible, we expect to be near optimality. On the unconstrained problem \eqref{eq: dual mean-variance} our scheme and the one from \cite{CL19} give similar results.
 
\subsection{Optimal storage of wind-generated electricity}\label{sec: storage}

We consider $N$ wind turbines with $N$ associated batteries. Define the productions $P^i_t$, storage levels $X^i_t$, storage injection $\alpha^i_t$ for which we provide a typical range\footnote{https://css.umich.edu/factsheets/us-grid-energy-storage-factsheet}. We consider the following constraints
\begin{align}
    \begin{cases}
        & 0 \leq X^i_t \leq X_{\max}\longrightarrow \mathrm{limited}\ \mathrm{storage}\ \mathrm{capacity} \ (1 \mathrm{kWh} - 10 \ \mathrm{ MWh})\\
        & \underline{\alpha} \leq \alpha^i_t \leq \overline{\alpha} \longrightarrow \mathrm{limited}\ \mathrm{injection/withdrawal}\ \mathrm{capacity} \ (10\ \mathrm{ kW} - 10 \mathrm{ MW})
    \end{cases}
\end{align} with $X_{\max}\geq 0 $, $\underline{\alpha} \leq 0 \leq \overline{\alpha} $. 
Define the spot price of electricity $S_t$ without wind power, $\widetilde{S_t}$ the price with wind production. Selling a quantity $P^i_t - \alpha^i_t$ on the market, producer $i$ obtains a revenue $\widetilde{S_t}(P^i_t - \alpha^i_t)$ (if $P^i_t - \alpha^i_t < 0$ the producer is buying from the market) 
where the market price is affected by linear price impact 
\begin{equation}
    \widetilde{S_t} = S_t - \frac{\Theta(N)}{N} \sum_{i=1}^N (P^i_t - \alpha^i_t), 
\end{equation}
modeling the impact of intermittent renewable production on the market. $\Theta$ is positive, non-decreasing and bounded. We call $\Theta_\infty = \lim_{N\rightarrow \infty} \Theta(N) < \infty$. 
We consider $N + 2$ independent Brownian motions $W_t^0,B_t^0,W_t^1,\cdots,W_t^N$  and the following dynamics for the producers $i=1,\cdots,N$ state processes
\begin{align}
    \begin{cases}
        & \di X^i_t = \alpha^i_t\ \di t\\
        & \di P^i_t = \iota (\phi P_{\max} - P_t^i)\ \di t +  \sigma_p(P_t\wedge \{P_{\max} - P_t^i\} )_+(\rho\ \di W_t^0 + \sqrt{1-\rho^2}\ \di W_t^{i})\\
        & \di F(t,T) = F(t,T) \sigma_f  e^{-a(T-t) }\di B_t^0 \nonumber\\
        & S_t = F(t,t) \nonumber,
    \end{cases}
\end{align}
for some positive constants  $\kappa$, $\phi$, $P_{\max}$, $\sigma_p$ $\sigma_f$, and  $\rho$ $\in$ $[-1,1]$. In the production dynamics, the common noises $W_t^0,B_t^0$ corresponds to the global weather and the market price randomness whereas the idiosyncratic noises $W_t^i$ for $i>1$  model the local weather, independent from one wind turbine to another. We call $\F^0$ the filtration generated by $W_t^0,B_t^0$. The productions $ P^i_t $ are bounded processes and the price $S_t$ is positive. Of course the modified price $\widetilde{S_t}$ in the presence of renewable producers can become negative, as empirically observed in some overproduction events. However it stays bounded by below in our model. Producer $i$ gain function to maximize is
\begin{align}
    J^i(\alpha_1,\cdots,\alpha_N) & = \E\Big[\int_0^T \{S_t(P^i_t - \alpha^i_t) - \frac{\Theta(N)}{N} (P^i_t - \alpha^i_t)\sum_{j=1}^N (P^j_t - \alpha^j_t) \}\ \di t \Big].  
\end{align}
The related mean field control problem for a central planner is therefore 
\begin{align}
    - \inf_{\alpha\in\Ac}\ &  \E\Big[\int_0^T \{-S_t(P_t - \alpha_t) + \Theta_\infty (P_t - \alpha_t) \E[P_t - \alpha_t | \F^0] \}\ \di t \Big]\\
        & \di X_t = \alpha_t\ \di t\\ 
          & \di P_t = \iota (\phi P_{\max} - P_t)\ \di t  + \sigma_p(P_t\wedge \{P_{\max} - P_t\} )_+(\rho\ \di W_t^0 + \sqrt{1-\rho^2}\ \di W_t^1)\\
        & \di F(t,T) = F(t,T) \sigma_f  e^{-a(T-t) }\di B_t^0 \nonumber\\
        & S_t = F(t,t) \nonumber\\ 
        & 0 \leq X_t \leq X_{\max}\ \P\ \mathrm{a.s.}
 \end{align}
Here the state is  $ (X_t, P_t, S_t)\in \R^3$ hence the distribution of the state lives in $\Pc_2(\R^3)$. The set $\Ac$ corresponds to progressively measurable controls with values in the compact set  $[\underline{\alpha} , \overline{\alpha}]$.
A similar problem is solved by \cite{ABM20} without any storage constraints by Pontryagin principle. With constraints but without mean-field interaction, a close problem is solved by \cite{PV19}. 
For instance $X_\mathrm{max} = 0$ corresponds to the much simpler problem without storage nor control of the valuation of a wind power park. See also \cite{CK21,W21}. To represent the almost sure constraint $0 \leq X_t \leq X_{\max}$ we choose as constrained function
\begin{equation}
    \Psi : \mu\in\Pc_2(\R^3) \mapsto \int_{\R}  \{(-x)_+^2 + (x-X_\mathrm{max})_+^2\}\ \mu_1(\di x),
\end{equation} where $\mu_1$ is the first marginal law of the measure $\mu$.

The auxiliary unconstrained control problem is therefore
\begin{align}\label{eq: auxiliary problem storage}
   w(z)   := -\inf_{\alpha,\beta^{0,1},\beta^{0,2}\in\Ac\times L^2\times L^2} \E \Big[& \{\int_0^T \E[-S_t(P_t - \alpha_t) + \Theta_\infty (P_t - \alpha_t) \E[P_t - \alpha_t | \F^0] | \F^0] \ \di t - z \\ & - \int_0^T \beta_s^{0,1}\ \di W_s^0- \int_0^T \beta_s^{0,2}\ \di B_s^0\}_+ + \frac{1}{\epsilon}\int_0^T \E[(-X_s)_+^2 + (X_s-X_{\mathrm{max}})_+^2]\ \di s \Big]   \\
        & \di X_t = \alpha_t\ \di t\\
        & \di P_t = \iota (\phi P_{\max} - P_t)\ \di t + \sigma_p(P_t\wedge \{P_{\max} - P_t\} )_+\ (\rho\ \di W_t^0 + \sqrt{1-\rho^2}\ \di W_t^1)\\
        & \di F(t,T) = F(t,T) \sigma_f  e^{-a(T-t) }\di B_t^0 \nonumber\\
        & S_t = F(t,t) \nonumber 
\end{align}
where $\epsilon$ is a small term used to force the a.s. constraints.\\

We  consider the standard stochastic control benchmark with only common noise for the production $(\rho=1)$.  
It corresponds to a single very large wind farm where all wind turbines produce the same power. 
The problem degenerates as
\begin{align}
     \label{eq:EoleDeg}
    - \inf_{\alpha\in\Ac}\ &  \E\Big[\int_0^T \{( -S_t +  \Theta_\infty (P_t - \alpha_t))  (P - \alpha_t)  \}\ \di t \Big] \nonumber\\
        & \di X_t = \alpha_t\ \di t \nonumber\\
        & \di P_t = \iota (\phi P_{\max} - P_t)\ \di t + \sigma_p(P_t\wedge \{P_{\max} - P_t\} )_+\  \di W_t^0  \nonumber\\
        & \di F(t,T) = F(t,T) \sigma_f  e^{-a(T-t) }\di B_t^0 \nonumber\\
        & S_t = F(t,t) \nonumber\\
        & 0 \leq X_t \leq X_{\max}\ \P\ \mathrm{a.s.}
 \end{align} 
 and  equation \eqref{eq: auxiliary problem storage}
 gives 
 \begin{align}\label{eq: auxiliary problem storage deg}
   w(z)   := -\inf_{\alpha,\beta\in\Ac\times L^2} \E \Big[& ((Y^{\alpha,\beta}-z)_+  +   \frac{1}{\epsilon}\int_0^T\E[(-X_s)_+^2 + (X_s-X_{\mathrm{max}})_+^2]\ \di s \Big]   \\
        & \di X_t = \alpha_t\ \di t\\
        & \di P_t = \iota (\phi P_{\max} - P_t)\ \di t + \sigma_p(P_t\wedge \{P_{\max} - P_t\} )_+\  \di W_t^0 \\
        & \di F(t,T) = F(t,T) \sigma_f  e^{-a(T-t) }\di B_t^0 \nonumber\\
        & S_t = F(t,t) \nonumber 
\end{align}
where
\begin{align}
\label{eq:Y}
    Y^{\alpha,\beta} = \int_0^T (-S_t+  \Theta_\infty (P_t - \alpha_t))(P_t - \alpha_t)  \ \di t  - \int_0^T \beta_s^{0,1}\ \di W_s^0- \int_0^T \beta_s^{0,2}\ \di B_s^0. 
\end{align}
The solution of our  optimization problem is then $z^* = \sup \{ z\ |\ \hat w(z) =0 \}$ where $\hat w(z) := -w(z)$.
 Remark now that \eqref{eq:Y} will be estimated discretizing  the integral  $\int_0^T \beta_s^{0,1}\ \di W_s^0$ and $\int_0^T \beta_s^{0,2}\ \di B_s^0$ using an Euler scheme for the underlying processes and therefore $\hat w(z)$ will be above $0$ except for low values of $z$ due to the variance of the $Y^{\alpha*,\beta}$ estimator that cannot be reduced to $0$.\\
 In order to reduce the variance of $Y^{\alpha, \beta}$, we propose to modify   $Y^{\alpha, \beta}$ as follows :
 \begin{align}
\label{eq:newY}
    Y^{\alpha,\beta} = & \int_0^T (-S_t+  \Theta_\infty (P_t - \alpha_t))(P_t - \alpha_t)  \ \di t -  \int_0^T (-S_t+  \Theta_\infty (P_t - \hat \alpha_t))(P_t - \hat \alpha_t)  \ \di t  + \\
    & \E[ \int_0^T (-S_t+  \Theta_\infty (P - \hat \alpha_t))(P_t - \hat \alpha_t)  \ \di t] - \int_0^T \beta_s^{0,1}\ \di W_s^0- \int_0^T \beta_s^{0,2}\ \di B_s^0
\end{align}
where $\hat \alpha_t$ is the rough estimation of the optimal {\bf deterministic} command maximizing the gain.\\
 We take $T = 40$, $N_T = 40$ time steps, $X_{\max} = 1$, $X_0=0.5$, $P_0 = 0.12$, $F(0,t) = 30  + 5\cos(\frac{2\pi t}{N}) + \cos(\frac{2\pi t}{7})$, $\sigma_f = 0.3$, $a=0.16$, $\iota = 0.2$, $\sigma_p = 0.2$, $\phi=0.3$, $P_{\max} = 0.2$, $-0.2\leq \alpha \leq 0.2$, $\Theta(N) = 10$. \\%
The network depends on $P_t, S_t,X_t$ and $z$ where $z$ takes some deterministic values on a grid with the same spacing.
The global curve is therefore approximated by a single run.\\
The grid is taken from $107$ to $127$ with a spacing  of $0.5$.
The neural networks have two hidden layers with 14 neurons on each layer.
We take a $\epsilon$ parameter equal to $10^{-4}$.
The number of gradient iterations is set to $50000$ with a learning rate equal to $ 2 \times 10^{-3}$ 
We give the $\hat w$ function on figure \ref{fig:LevelSetWStor}.
\begin{figure}[H]
    \centering
    \includegraphics[width=0.4\linewidth]{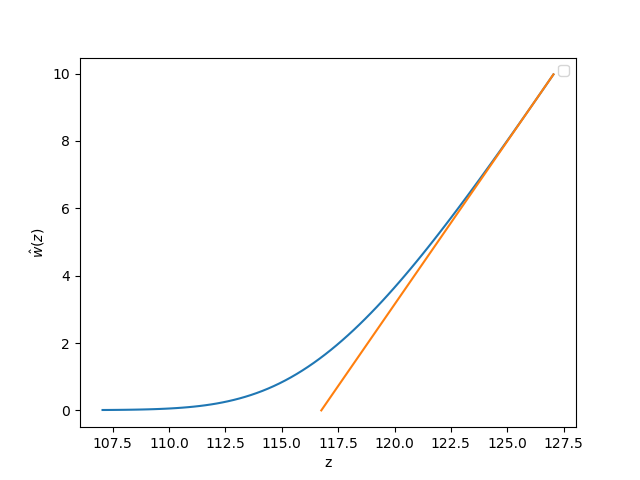}
    \caption{$\hat w$ function value for the storage problem}
    \label{fig:LevelSetWStor}
\end{figure}
Using  Dynamic Programming with the StOpt library \cite{gevret2018stochastic}, we get an optimal value equal to $117.28$ while  a direct optimization of \eqref{eq:EoleDeg} using some neural networks as in \cite{W21}, \cite{CL19} we get a value of $117.11$. Encouraged by Remark \ref{rem: lambda}, Figure \ref{fig: w values}, Figure \ref{fig: w primal} and the related comments, we empirically estimate the value function by the point where the linear part of the auxiliary function reaches zero when $\Lambda = \frac{1}{\varepsilon}$ is sufficiently large. The estimated value is $116.75$, close to the reference solutions.
On figure \ref{fig:trajStorage}, we compare trajectories obtained by Dynamic Programming and by the Level Set approach : they are accurately calculated.
\begin{figure}[H]
    \centering
    \begin{minipage}[c]{.32\linewidth}
    \includegraphics[width=\linewidth]{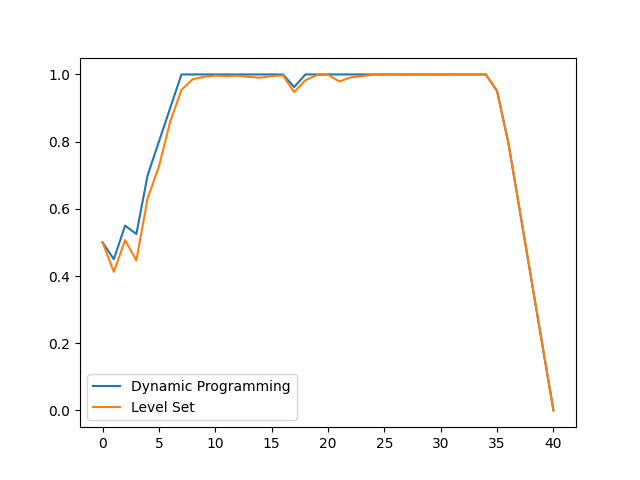}
    \end{minipage}
    \begin{minipage}[c]{.32\linewidth}
    \includegraphics[width=\linewidth]{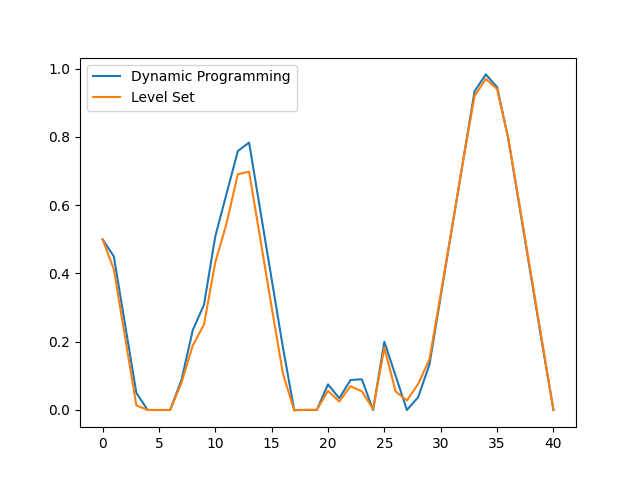}
    \end{minipage}
    \begin{minipage}[c]{.32\linewidth}
    \includegraphics[width=\linewidth]{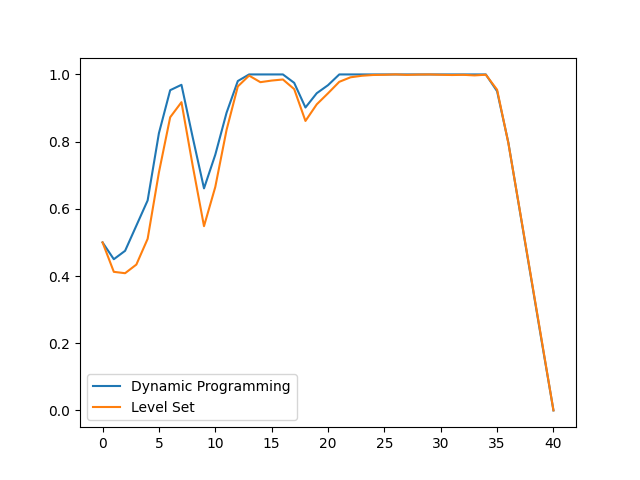}
    \end{minipage}
    \caption{Storage trajectories with the Level Set and Dynamic  Programming method.}
    \label{fig:trajStorage}
\end{figure}

\printbibliography

\end{document}